

\documentclass[11pt]{article} 
\usepackage{etex}
\usepackage[utf8]{inputenc} 


\usepackage{amsthm,amssymb,amsmath}
\usepackage{mathtools}

\usepackage{geometry} 
\geometry{a4paper} 

\usepackage{graphicx} 
\usepackage{color}


\usepackage{booktabs} 
\usepackage{array} 
\usepackage{paralist} 
\usepackage{verbatim} 
\usepackage{longtable} 
\usepackage[all,cmtip]{xy} 
 \usepackage{tikz}
 \usetikzlibrary{shapes,arrows,chains,matrix,positioning,scopes, patterns}
 \usepackage{pgfplots}


\usepackage{sectsty}
\allsectionsfont{\sffamily\mdseries\upshape} 

\usepackage{graphicx}
\usepackage[all]{xy}
\usepackage{epsfig}
\usepackage{graphics}
\usepackage{caption}
\usepackage{subcaption}
\usepackage{enumerate}
\usepackage{array}
\newcolumntype{P}[1]{>{\raggedright\arraybackslash}p{#1}}

\usepackage[nottoc,notlof,notlot]{tocbibind} 

\numberwithin{equation}{section}
\theoremstyle{plain}%
\newtheorem{theorem}{Theorem}
\numberwithin{theorem}{section}
\newtheorem{proposition}[theorem]{Proposition}
\newtheorem{example}[theorem]{Example}
\newtheorem{lemma}[theorem]{Lemma}
\newtheorem{corollary}[theorem]{Corollary}
\newtheorem{definition}[theorem]{Definition}
\newtheorem{remark}[theorem]{Remark}

\newtheorem{problem}[theorem]{Problem}
\newtheorem{algorithm}[theorem]{Algorithm}



\title{Matrix Completion for the Independence Model}
\author{Kaie Kubjas, Zvi Rosen}
\date{\today} 

\begin{document}
\maketitle

\begin{abstract}

We investigate the problem of completing partial matrices 
to rank-one matrices in the standard simplex $\Delta^{mn-1}$. 
The motivation for studying this problem comes from statistics: 
A lack of eligible completion can provide a falsification test for 
partial observations to come from the independence model. 
For each pattern of specified entries,
we give equations and inequalities which are satisfied 
if and only if an eligible completion exists.  We also describe
the set of valid completions, and we optimize over this set.

\noindent {\bf Key words:} matrix completion; independence model; weighted graphs; tensor completion; real algebraic geometry; optimal completions.

\noindent {\bf AMS subject classifications:} 15A83; 05C50; 14P10.
\end{abstract}

\section{Introduction}

The pattern $S$ of a partial matrix is the set of positions 
of specified entries in the partial matrix. A partial matrix $M$ is 
subordinate to $S$ if its pattern is $S$. 
We are interested in the geometry of two semialgebraic
sets: (1) the projection of rank-one matrices in $\Delta^{mn-1}$ 
to the entries in $S$, and (2) the set of 
completions for a fixed partial matrix $M$, as a subset of $\Delta^{mn-1}$.
Our exposition will be aimed at addressing the following two problems:

\begin{problem} \label{prob:image}
Given $m,n$, and $S \subseteq [m] \times [n]$, 
find defining equations and inequalities for the 
set of partial matrices subordinate to pattern $S$ that
can be completed to a rank-one matrix in $\Delta^{mn-1}$.
\end{problem}

\begin{problem}  \label{prob:fiber}
Given a partial matrix $M$ and $S \subseteq [m] \times [n]$, characterize 
all rank-one matrices in $\Delta^{mn-1}$ whose projection to the entries in $S$
agree with $M$.
\end{problem}

\begin{example} Let $m = n = 4 $, and $S =\{(1,1),(2,2),(3,3),(4,4)\}$. 
 By our results in later sections, we can answer 
 Problem \ref{prob:fiber} for the partial matrix on the left: there is a unique completion
 given by the matrix at right:
\[
\begin{pmatrix}
0.16 &  &  &  \\
 & 0.09 &  &  \\
 & & 0.04  &  \\
&  &  & 0.01  \\
\end{pmatrix} \to 
\begin{pmatrix}
0.16 & 0.12 & 0.08 & 0.04 \\
0.12 & 0.09 & 0.06 & 0.03 \\
0.08 & 0.06 & 0.04  & 0.02 \\
0.04 & 0.03 & 0.02 & 0.01  \\
\end{pmatrix}
\]
Perturbing any entry of the partial matrix by 
$\epsilon > 0$ makes the set of completions empty, and perturbing 
any entry by $\epsilon < 0$ 
introduces an infinite number of completions.
We will also answer Problem \ref{prob:image} for this
choice of $m,n$, and $S$: $M$ is completable
if and only if $\sum_{i = 1}^4 \sqrt{m_{ii}} \leq 1$.
\end{example}

The low-rank matrix completion is very well-studied: the three main directions have been convex relaxation of the rank constraints~\cite{fazel2001rank,CR09, CT10,recht2011simpler}, spectral matrix completion~\cite{KMO10} and algebraic-combinatorial approach~\cite{CJRW89,HHW06,KiralyTheranTomiokaUno}. We make use of existing algebraic-combinatorial
approaches in the latter three articles in our analysis. Our contribution is
looking at how the two conditions -- (1) nonnegativity and (2) 
summing to one -- affect the rank-one completion problem. The perspective of
the paper is a combinatorial and geometric rather than an algorithmic one, but we 
examine algorithms as they relate to the geometry.

The motivation for restricting to the simplex comes from statistics,
and was suggested by Vishesh Karwa and Aleksandra Slavkovi\'c.  
Let $X$ and $Y$ be two discrete random variables with $m$ and $n$ 
states respectively. Their joint probabilities are recorded in the matrix:
$
\mathbf{P} = (p_{ij}):1\leq i \leq m, 1 \leq j \leq n$, where  $ p_{ij} = Pr(X = i, Y = j)$.
 For any such matrix $\mathbf{P}$, we have $p_{ij} \geq 0$ for all $i,j$ and 
 $\sum p_{ij}=1$. We say that random variables $X$ and $Y$ are independent, if 
$ Pr(X = i, Y = j) = Pr(X = i)\cdot Pr(Y =j) $ for all $i,j$. This can be translated into the statement 
\[ \begin{matrix}
\mathbf{P} & = & \mathbf{X}^T \mathbf{Y }, \text{  where  }\\
\mathbf{X} & = & \begin{pmatrix}
Pr(X=1) & Pr(X=2) & \cdots &Pr(X=m) 
\end{pmatrix} \text{  and  } \\
\mathbf{Y} & = &
\begin{pmatrix}
Pr(Y=1) & Pr(Y=2) & \cdots & Pr(Y=n)
\end{pmatrix}.
\end{matrix}
\]
Hence, the matrix $\mathbf{P}$ of joint probabilities of two 
independent random variables has rank one, is 
nonnegative, and its entries sum to one. In other 
words, $\mathbf{P}$ is a rank-one matrix in the standard simplex $\Delta^{mn-1}$.

Our problems are of interest when probabilities $Pr(X=i,Y=j)$ 
are measurable only for certain pairs $(i, j)$. Situations in 
which this might arise in applications are: a pair of compounds 
in a laboratory that only react when in certain states, a pair
of alleles whose effects cancel each other out, etc. 
A complete answer to Problem~\ref{prob:image} will allow us to 
reject a hypothesis of independence of the events $X$ and $Y$, 
based only on this collection of probabilities. For other problems about 
matrix completion coming from statistics see
~\cite{King97} and~\cite{SU10}.
In the rest of the paper we will not consider the statistical context. 
Addressing questions like noise, the details of the 
falsification test, and experiments are left for a more 
statistical paper in the future.

\textbf{Outline.}
In Section~\ref{section:completability}, we derive for given $m,n$, and $S \subseteq [m] \times [n]$ 
inequalities and equations which are fulfilled by the entries 
of a partial matrix subordinate to $S$ if and only if the 
partial matrix is completable to a rank-one matrix in $\Delta^{mn-1}$. 
Our discussion starts with positive diagonal  partial matrices in Section~\ref{subsection:diagonal_masks},  continues with positive block partial matrices in Section~\ref{subsection:block_matrices} and positive general partial matrices 
in Section~\ref{subsection:arbitrary_masks}. 
Finally, Section~\ref{subsection:boundary} extends 
these results to nonnegative general  partial matrices.
The main result for positive partial matrices is described in
Theorem~\ref{main_theorem} and for nonnegative partial
matrices in Theorem~\ref{main_theorem2}.
The Section~\ref{section:completability} ends with an algorithm 
for checking completability of a partial matrix to a rank-one
matrix in $\Delta^{mn-1}$.

In 
Section~\ref{section:completions}, we describe the various
completions for a given partial matrix.  In 
Section~\ref{subsection:pos_completions}, 
we show how to construct a completion for a positive partial matrix.
In Section~\ref{subsection:optimization}, we will use Lagrange 
multipliers to construct a rank-one  completion in 
the standard simplex which maximizes or minimizes 
a certain function, e.g. the distance from the uniform distribution. 
In Section~\ref{subsection:boundary2}, we describe the set of
completions when the partial matrix contains zeros. 

In 
Section~\ref{section:generalizations}, we study generalization 
of our results to higher rank matrices and tensors in the standard simplex.
In particular, in Theorem~\ref{thm:tensors} we will derive a characterization 
of diagonal partial tensors which can be completed to 
rank-one  tensors in the standard simplex, i.e. rank-one tensors 
whose entries are nonnegative and sum to one. 

Partial tensors subordinate to a pattern $S$ which can be 
completed to a rank-one  tensor in the standard simplex 
form a semialgebraic set, see 
Proposition~\ref{prop:completable_matrices_form_a_semialgebraic_set}.
In 
Proposition~\ref{prop:semialgebraic_description}, we 
study the algebraic boundary of this semialgebraic set.

Implementations of algorithms can be found on
\[ \hbox{\tt math.upenn.edu/$\sim$zvihr/probCompletion.html} \]

\section{Completability : Equations and Inequalities}
\label{section:completability}

In this section, we completely solve 
Problem~\ref{prob:image} for all patterns $S$.
We begin by solving the problem for diagonal patterns, and
then extrapolate to cases of increasing generality:
\centerline{Diagonal $\implies$ Block $\implies$ General.}
Initially, we will only consider positive partial matrices, 
and find its defining equations and inequalities.
In Section~\ref{subsection:boundary}, we will consider partial
matrices with zero entries.

\subsection{Diagonal Patterns}
\label{subsection:diagonal_masks}

A \textit{diagonal partial matrix} is an $n \times n$ partial matrix (a point in $\mathbb{R}^n$) subordinate to the diagonal pattern $\{(1,1), \ldots, (n,n)\}$. In the $1 \times 1$ case, this is trivially completable, indeed completed, if and only if the observed entry is one. For a $2 \times 2$ matrix, there is more to consider.

\begin{example} \label{2by2}
Let $M$ be the partial matrix given by:
$M = \operatorname{diag}\left(
a , b \right)$.
In order for the matrix to be completed, both the rank one 
requirement and the summing to one must be addressed. 
First, for rank one, the off-diagonal entries are set to $x$ 
and $ab/x$, then the quantity $a +  ab/x + x + b$ is set 
equal to one. The equivalent quadratic equation is $x^2 + (a + b - 1) x + ab = 0$. In order for a real solution for $x$ to exist, the discriminant must be $\geq 0$, i.e.
\[ (a + b - 1)^2 - 4ab \geq 0.\]
This inequality, along with the requirement that $a + b \leq 1$ and both $a, b \geq 0$, is necessary and sufficient to guarantee that $x$ gives a completion in $\Delta^3$, see Figure~\ref{figure:2x2}.

\begin{figure}[h!]
\centering
\includegraphics[width=.5\textwidth]{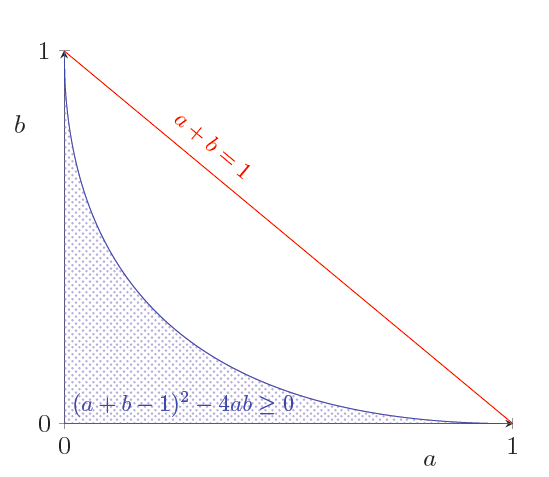}
\caption{\small{$2 \times 2$ diagonal partial matrices which can be completed to a rank-one matrix in $\Delta^3$.}}
\label{figure:2x2}
\end{figure}

\end{example}

For $n > 2$, we take advantage of the factorization of rank-one matrices as products of vectors to obtain the following more general result:

\begin{theorem} \label{halfnorm}
Let $M$ be an $n \times n$ diagonal partial matrix given by $\operatorname{diag}(a_1,\ldots,a_n)$, where $n\geq 2$, and $a_i > 0$ for all $i$. Then $M$ is completable if and only if $\sum_{i = 1}^n \sqrt{a_i} \leq 1$, or equivalently, $|| (a_1,\ldots,a_n) ||_{1/2} \leq 1$. 
\end{theorem}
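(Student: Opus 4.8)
The plan is to parametrize the rank-one completions explicitly and reduce the existence question to a minimization over a single vector that Cauchy--Schwarz resolves exactly. First I would write any rank-one matrix as an outer product $p_{ij} = u_i v_j$ for vectors $u, v \in \mathbb{R}^n$. The diagonal constraints then read $u_i v_i = a_i > 0$, and nonnegativity of all entries forces a short sign analysis: since $a_i > 0$, each pair $u_i, v_i$ is nonzero with matching signs, and if two coordinates of $u$ had opposite signs the off-diagonal entries $u_i v_j$ would force a column of $v$ to vanish, contradicting positivity of the $a_i$. Hence, after replacing $(u,v)$ by $(-u,-v)$ if necessary, I may assume $u_i > 0$ and $v_i > 0$ for all $i$, so that every entry of the completion is in fact positive.

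With this reduction, $v_i = a_i/u_i$ is determined by $u$, and the only remaining requirement is that the entries sum to one:
\[ \Big(\sum_{i=1}^n u_i\Big)\Big(\sum_{i=1}^n \frac{a_i}{u_i}\Big) = 1. \]
Thus $M$ is completable if and only if the function $f(u) = \big(\sum_i u_i\big)\big(\sum_i a_i/u_i\big)$ takes the value $1$ somewhere on $(0,\infty)^n$. The key step is Cauchy--Schwarz, which gives
\[ f(u) = \Big(\sum_i u_i\Big)\Big(\sum_i \frac{a_i}{u_i}\Big) \ge \Big(\sum_i \sqrt{u_i}\cdot\sqrt{\tfrac{a_i}{u_i}}\Big)^2 = \Big(\sum_i \sqrt{a_i}\Big)^2, \]
with equality exactly when $u_i$ is proportional to $\sqrt{a_i}$. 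So $\big(\sum_i \sqrt{a_i}\big)^2$ is the minimum value of $f$, and this already yields the ``only if'' direction.

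For the ``if'' direction I would identify the full range of $f$. Its domain $(0,\infty)^n$ is connected and $f$ is continuous; $f$ attains its minimum $\big(\sum_i\sqrt{a_i}\big)^2$ (at any $u$ with $u_i \propto \sqrt{a_i}$) and tends to $\infty$ along, say, $u_1 \to \infty$ with the remaining coordinates fixed. Hence the image of $f$ is exactly the interval $\big[(\sum_i\sqrt{a_i})^2,\ \infty\big)$. Therefore $f(u)=1$ is solvable precisely when $1 \ge \big(\sum_i\sqrt{a_i}\big)^2$, i.e. when $\sum_i \sqrt{a_i} \le 1$, which is the same as $\| (a_1,\ldots,a_n) \|_{1/2} \le 1$ since both quantities are nonnegative and $t \mapsto t^2$ is monotone there.

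I expect the delicate points to be the sign reduction to strictly positive $u$ and $v$, and verifying that the image of $f$ is the entire half-line (via connectedness together with unboundedness) rather than only the lower bound coming from Cauchy--Schwarz; once the range is pinned down, the characterization follows immediately. The boundary case $\sum_i\sqrt{a_i}=1$ needs no separate argument, as it is realized directly by the Cauchy--Schwarz minimizer $u_i \propto \sqrt{a_i}$, for which $f(u)=1$ exactly.
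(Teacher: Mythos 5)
Your proof is correct. It shares the overall skeleton of the paper's argument---parametrize a rank-one completion by the row-factor $u$, let $v_i = a_i/u_i$ be determined, and reduce completability to whether the sum-of-entries function hits the value $1$, settled by a minimum computation plus the intermediate value theorem---but the execution differs at the key step. The paper normalizes $u$ to the simplex $\Delta^{n-1}$, minimizes $f(u) = \sum_i a_i/u_i$ by setting partial derivatives to zero, and then uses blow-up of $f$ near the boundary of the simplex; strictly speaking, identifying that critical point as the \emph{global} minimum also rests on that boundary behavior together with compactness, which the paper leaves implicit. You instead work on all of $(0,\infty)^n$ with the scale-invariant function $f(u) = \bigl(\sum_i u_i\bigr)\bigl(\sum_i a_i/u_i\bigr)$ and obtain the exact global minimum $\bigl(\sum_i \sqrt{a_i}\bigr)^2$, with its equality case, in one stroke from Cauchy--Schwarz---no calculus, no compactness. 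This is cleaner and slightly more rigorous on that point; it is also consonant with the paper itself, which invokes Cauchy--Schwarz for the necessity direction when it later extends the theorem to nonnegative diagonal entries. A second difference is that you prove, via the sign analysis, that a completion with positive diagonal must come from entrywise positive (after a global sign flip) factors, whereas the paper simply recalls the standard fact that a rank-one matrix in $\Delta^{nn-1}$ factors as a product of probability vectors; your version is self-contained, at the cost of a few lines. One phrasing quibble: in the sign argument, opposite signs among the $u_i$ force the entire vector $v$ to vanish (each coordinate $v_k$ is squeezed between $\geq 0$ and $\leq 0$), not merely ``a column of $v$''---the mathematics is right, but state it as $v = 0$.
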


\begin{proof}
Recall that a rank-one $n\times n$  matrix $M \in \Delta^{nn-1}$ can be factored as ${\bf u}^T{\bf v}$ for ${\bf u}, {\bf v} \in \Delta^{n-1}$. For this problem, consider all possible values of ${\bf u}$ in $\Delta^{n-1}$, but do not restrict the values of ${\bf v}$ to the simplex. Instead, let ${\bf v}$ be formulated in terms of ${\bf u}$ and the entries of the matrix.  Explicitly, we have $M = \begin{pmatrix}
u_1 &  \cdots & u_n
\end{pmatrix}^T
\begin{pmatrix} 
a_1/u_1 & \cdots & a_n/u_n 
\end{pmatrix}$.
An eligible completion will arise when $\sum v_i = 1$.  Here we assume $a_i>0$ and thus $u_i>0$ for all $i$.

Let $f({\bf u})=\sum_{i=1}^n v_i=\sum_{i=1}^n a_i/ u_i$ and compute the minimum of $f$ on the simplex. For this computation, consider $u_1,\ldots, u_{n-1}$ as independent variables and $u_n = 1- \sum u_i$.

\[ f = \left(\sum_{i = 1}^{n-1} \dfrac{a_i}{u_i} \right) + \dfrac{a_n}{1-\sum_{i=1}^{n-1} u_i} \:
\Rightarrow \:\: \dfrac{\partial f}{\partial u_i} = -\dfrac{a_i}{u_i^2} + \dfrac{a_n}{(1-\sum_{i=1}^{n-1} u_i)^2}\]
Setting $\partial f/ \partial u_i = 0$ for all $i=1,\ldots,n-1$ implies $a_i/u_i^2=k$ is constant for all $i$. Since ${\bf u}$ is in the simplex, we have $k$ equal to $(\sum \sqrt{a_i})^2$. The value of $f$ (i.e. the sum of $v_i$) at this point is $(\sum \sqrt{a_i})^2$.  If this is $\leq 1$, continuity of $f$ implies that a completion exists somewhere between our minimum and the boundary, because within an $\epsilon$ of the boundary of $\Delta^{n-1}$, we have
\[ \sum v_i =  \dfrac{a_1}{u_1} + \cdots + \dfrac{a_n}{u_n} \gg 1.\]
If this minimum value is $> 1$, the function will not achieve $1$ anywhere in the simplex, so no completion is possible.
\end{proof}

\begin{remark}\label{remark:2x2_case_example_vs_theorem}
The result of Theorem \ref{halfnorm} works to derive the inequality for the $2\times 2$ diagonal partial matrices as in Example \ref{2by2}:
\[ \begin{matrix} \sqrt{a} + \sqrt{b} \leq 1 & \Leftrightarrow &a + b + 2\sqrt{a b} \leq 1 & \Leftrightarrow & 2\sqrt{a b} \leq 1 - a - b \\
& \Leftrightarrow & 4 a b \leq (1 - a - b)^2 & \text{ and } & 0 \leq 1-a-b. \end{matrix}\]
\end{remark}

The set of $3 \times 3$ diagonal partial matrices that are completable to  rank-one matrices in the standard simplex is shown in Figure~\ref{figure:3x3}.
\begin{figure}[h]
\centering
\includegraphics[scale = 0.4]{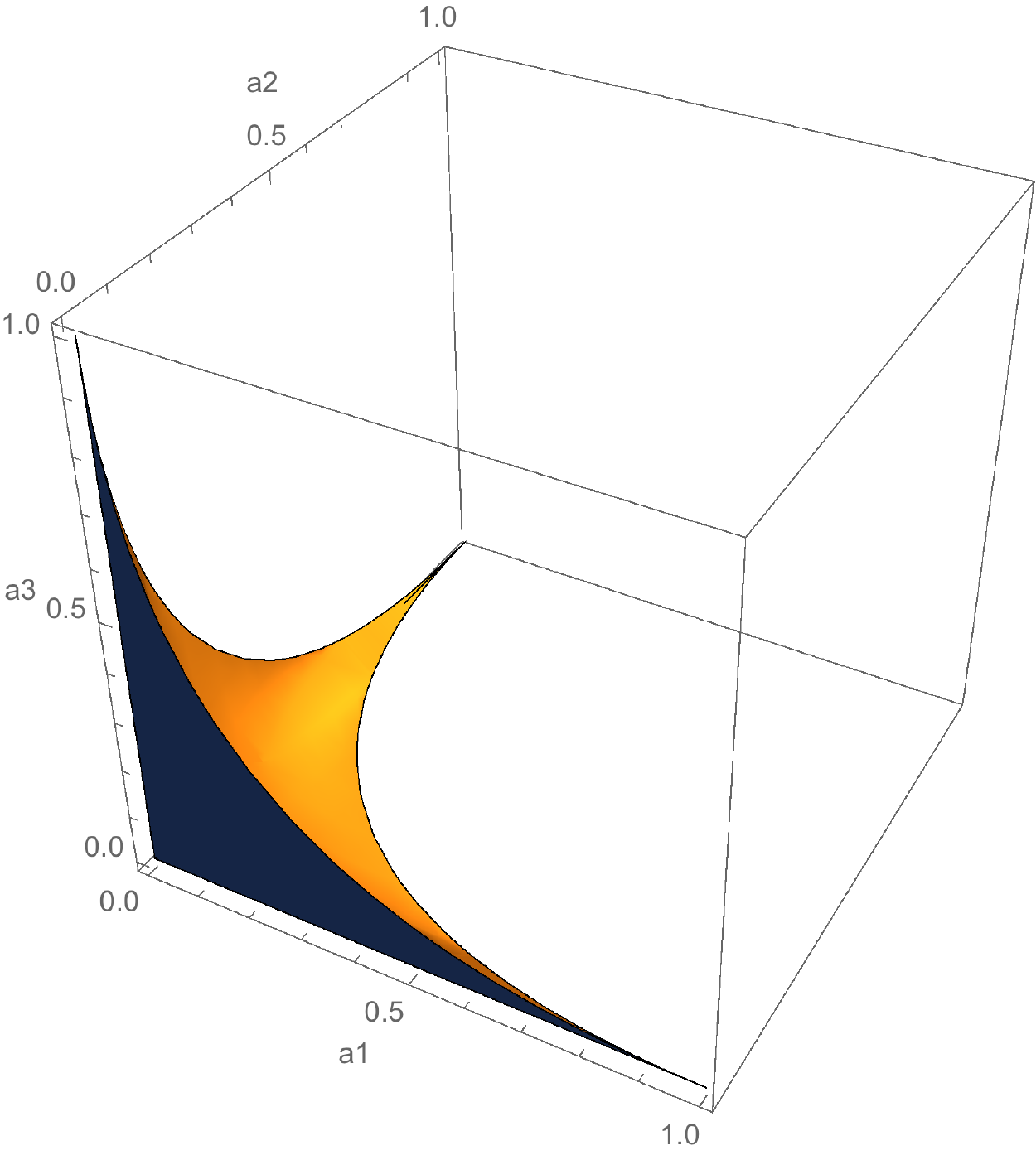}
\caption{\small $3 \times 3$ diagonal partial matrices which are completable to a rank-one matrix in $\Delta^{8}$.}
\label{figure:3x3}
\end{figure}

\subsection{Block Diagonal Patterns}\label{subsection:block_matrices}

We say that $\{i_1, \ldots, i_p\} \times \{j_1, \ldots, j_q\}$ 
is a block of specified entries if the entries specified in 
rows $\{i_1, \ldots, i_p\}$ and columns $\{j_1, \ldots, j_q\}$ 
are exactly $\{i_1, \ldots, i_p\} \times \{j_1, \ldots, j_q\}$.  
A \textit{block partial matrix} is a partial matrix subordinate 
to a pattern that is the union of blocks of specified entries 
and every row and column contains at least one specified entry.
We begin with a corollary to Theorem~\ref{halfnorm}.

\begin{corollary}\label{cor:completability_block}
Let $M$ be a partial matrix with a rank-one block of positively specified entries 
$\{1, \ldots, p\} \times \{1, \ldots, q\}$. 
Let $\widehat{M}$ be the partial matrix obtained from $M$ 
by contracting rows $1, \ldots, p$ to one row, columns 
$1, \ldots, q$ to one column and specifying 
$$\widehat{m}_{11}=\displaystyle \sum_{i = 1}^p \sum_{j = 1}^q m_{ij}.$$ 
The completability of $M$ is equivalent to completability of $\widehat{M}$.
\end{corollary}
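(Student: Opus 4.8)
The plan is to exploit the rank-one factorization $M=\mathbf{u}^T\mathbf{v}$ with $\mathbf{u}\in\Delta^{m-1}$ and $\mathbf{v}\in\Delta^{n-1}$ (as in the proof of Theorem~\ref{halfnorm}) and to observe that the prescribed contraction corresponds exactly to summing the coordinates of $\mathbf{u}$ over rows $1,\dots,p$ and the coordinates of $\mathbf{v}$ over columns $1,\dots,q$. Concretely, for $\mathbf{u}=(u_1,\dots,u_m)$ set $\widehat{\mathbf{u}}=(u_1+\dots+u_p,\,u_{p+1},\dots,u_m)$, and define $\widehat{\mathbf{v}}$ analogously from $\mathbf{v}$; since the factors are nonnegative and sum to one, $\widehat{\mathbf{u}}$ and $\widehat{\mathbf{v}}$ again lie in the respective simplices. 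The argument then rests on the identity
\[ \sum_{i=1}^p\sum_{j=1}^q u_i v_j=\Bigl(\sum_{i=1}^p u_i\Bigr)\Bigl(\sum_{j=1}^q v_j\Bigr), \]
which says precisely that the $(1,1)$ entry of $\widehat{\mathbf{u}}^T\widehat{\mathbf{v}}$ equals the block sum $\widehat{m}_{11}$. I will also use that a block of specified entries is \emph{isolated}: by definition no entry of $M$ in rows $1,\dots,p$ lies outside columns $1,\dots,q$, and symmetrically for the columns, so every specified entry of $M$ outside the block has both row index $>p$ and column index $>q$.

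For the forward direction, suppose $M$ is completable, say $M=\mathbf{u}^T\mathbf{v}$ is a rank-one matrix in $\Delta^{mn-1}$ agreeing with the specified entries. I would form $\widehat{\mathbf{u}},\widehat{\mathbf{v}}$ as above and check that $\widehat{\mathbf{u}}^T\widehat{\mathbf{v}}$ completes $\widehat{M}$: the displayed identity produces the contracted entry $\widehat{m}_{11}$, while every other specified entry of $\widehat{M}$ is a specified entry of $M$ lying in rows $>p$ and columns $>q$, hence is reproduced unchanged by the corresponding product $u_iv_j$.

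The reverse direction is the more delicate one, and it is where positivity and rank-one-ness of the block are used. Given a completion $\widehat{\mathbf{u}}^T\widehat{\mathbf{v}}$ of $\widehat{M}$, note first that $\widehat{m}_{11}=\widehat{u}_1\widehat{v}_1>0$, so $\widehat{u}_1,\widehat{v}_1>0$. Since the block is rank one and positive, it factors as $m_{ij}=\alpha_i\beta_j$ with $\alpha_i,\beta_j>0$, uniquely up to the rescaling $(\alpha,\beta)\mapsto(c\alpha,\beta/c)$; I would fix $c$ so that $\sum_{i=1}^p\alpha_i=\widehat{u}_1$, which by the block-sum identity forces $\sum_{j=1}^q\beta_j=\widehat{m}_{11}/\widehat{u}_1=\widehat{v}_1$. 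Now define $\mathbf{u}$ by replacing the first coordinate $\widehat{u}_1$ of $\widehat{\mathbf{u}}$ with $(\alpha_1,\dots,\alpha_p)$, and $\mathbf{v}$ similarly from $\beta$. The normalization guarantees $\mathbf{u}\in\Delta^{m-1}$ and $\mathbf{v}\in\Delta^{n-1}$, and one checks $u_iv_j=\alpha_i\beta_j=m_{ij}$ on the block, while the isolation property ensures every remaining specified entry satisfies $u_iv_j=\widehat{u}_{i-p+1}\widehat{v}_{j-q+1}=m_{ij}$. Hence $\mathbf{u}^T\mathbf{v}$ completes $M$.

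The main obstacle is the bookkeeping in the reverse direction: one must verify that a single scaling constant $c$ can match both marginal sums simultaneously, and that after the replacement no specified entry outside the block is disturbed. Both points hinge on the hypotheses that the block is isolated (so there are no ``mixed'' specified entries in rows $\leq p$, columns $>q$ or vice versa) and positively rank one (so the factorization exists with strictly positive entries and $\widehat{u}_1>0$ makes the normalization legitimate). Once these are in place, the two constructions are mutually inverse at the level of marginals, which yields the desired equivalence.
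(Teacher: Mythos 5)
Your proposal is correct and follows essentially the same route as the paper's proof: the forward direction contracts a completion $\mathbf{u}^T\mathbf{v}$ by summing the first $p$ coordinates of $\mathbf{u}$ and the first $q$ coordinates of $\mathbf{v}$, and the reverse direction factors the rank-one block as ${\bf u'}^T{\bf v'}$ and splits $\widehat{u}_1,\widehat{v}_1$ proportionally, which is exactly your normalization by the constant $c$. Your write-up is somewhat more explicit than the paper's about the bookkeeping (isolation of the block, positivity of $\widehat{u}_1,\widehat{v}_1$, and why one scaling constant matches both marginal sums), but the underlying argument is the same.
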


\begin{proof}
($\Rightarrow$) Let ${\bf u}^T {\bf v}$ be a completion of $M$. Then one completion of $\widehat{M}$ is:
\vspace{-2mm}
\[
\begin{pmatrix}
\sum_{i=1}^p u_i &
u_{p+1} &
\cdots &
u_m
\end{pmatrix}^T
\begin{pmatrix}
\sum_{j=1}^q v_j & v_{q+1} & \cdots & v_n
\end{pmatrix}.
\] \vspace{-6mm}

($\Leftarrow$) Let $\widehat{\bf {u}}^T \widehat  {\bf v}$ be a
 completion of $\widehat M$, where 
$\widehat{\bf u}=(\widehat{u}_1,
 \widehat{u}_{p+1}, \ldots, \widehat{u}_m)$ \newline and $\widehat{\bf v}=
 (\widehat{v}_1,\widehat{v}_{q+1},$ $\ldots, \widehat{v}_n)$. Since 
 $m_{1,1},\ldots,m_{p,q}$ form a rank-one $p\times q$ matrix, 
 it has a rank-one factorization ${\bf u'}^T {\bf v'}$. Then one completion of $M$ is:
\newline
$
\begin{pmatrix}
\widehat{u}_1 \frac{u'_1}{\sum u'_i} &
\cdots&
\widehat{u}_1 \frac{u'_p}{\sum u'_i}&
\widehat{u}_{p+1}&
\cdots&
\widehat{u}_m
\end{pmatrix}^T
\begin{pmatrix}
\widehat{v}_1 \frac{v'_1}{\sum v'_i} &
\cdots &
\widehat{v}_1 \frac{v'_q}{\sum v'_i} &
\widehat{v}_{q+1} &
\cdots &
\widehat{v}_m
\end{pmatrix}.
$
\end{proof}

The corollary implies that a rank-one block in the upper-left corner
of a matrix can be replaced by the sum of its entries without changing
the completability of the full matrix. The same logic applies to any
rank-one block in the matrix, since row and column permutations do not
affect rank or the value of the sum of entries. We then have this
result for block partial matrices.

\begin{corollary}
Let $M$ be a block partial matrix with rank-one blocks of positively specified entries. Let $\widehat{M}$ be a 
diagonal partial matrix with one diagonal entry for each
 block of specified entries of $M$. Let a diagonal entry of
 $\widehat{M}$ be equal to the sum of entries in the 
corresponding block of specified entries of $M$. 
Then completability of $M$ is equivalent to 
completability of $\widehat{M}$.
\end{corollary}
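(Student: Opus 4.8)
The plan is to deduce this multi-block statement from repeated application of Corollary~\ref{cor:completability_block}, which handles the contraction of a single rank-one block. Two facts recorded in the paragraph following that corollary do most of the work: first, completability is invariant under row and column permutations, since such permutations preserve rank, nonnegativity, and the sum of entries, hence membership in $\Delta^{mn-1}$; and second, as a consequence, \emph{any} rank-one block of $M$ (not merely one anchored in the upper-left corner) may be replaced by a single entry equal to the sum of that block's entries without affecting completability. I would therefore argue by iterating the single-block contraction across all the blocks of $M$.

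Concretely, let $B_1,\ldots,B_r$ be the blocks of positively specified entries of $M$. I would contract them one at a time: to contract $B_k=\{i_1,\ldots,i_p\}\times\{j_1,\ldots,j_q\}$, first permute rows and columns so that $B_k$ occupies the upper-left corner, then apply Corollary~\ref{cor:completability_block} to replace $B_k$ by the single entry $\sum_{i\in\{i_1,\ldots,i_p\}}\sum_{j\in\{j_1,\ldots,j_q\}} m_{ij}$, obtaining a partial matrix whose completability equals that of $M$. Chaining these $r$ equivalences, the completability of $M$ equals that of the matrix obtained after all $r$ contractions. Since each block has now been replaced by a single entry, and these entries sit in distinct rows and distinct columns (one per block), the terminal matrix is precisely the diagonal partial matrix $\widehat{M}$ described in the statement, with each diagonal entry equal to the sum of the entries of the corresponding block.

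The step I expect to require the most care is verifying that the block structure is genuinely preserved under each contraction, so that Corollary~\ref{cor:completability_block} remains applicable at every stage. The crucial structural observation is that in a block partial matrix the blocks occupy pairwise disjoint sets of rows and columns. Contracting $B_k$ merges only the rows $\{i_1,\ldots,i_p\}$ and the columns $\{j_1,\ldots,j_q\}$, which meet no other block; hence the remaining blocks survive intact as rank-one blocks of specified entries, and the newly created entry — whose merged row and merged column each contain only that single specified entry — forms a trivial $1\times 1$ block interacting with none of them. Thus the contracted matrix is again a block partial matrix, and the hypotheses needed for the next contraction are maintained. Once this invariance is established, the inductive chain of equivalences closes and yields completability of $M$ if and only if completability of $\widehat{M}$.
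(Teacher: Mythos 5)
Your proposal is correct and follows essentially the same route as the paper, which justifies this corollary in the paragraph preceding it by combining Corollary~\ref{cor:completability_block} with the observation that row and column permutations preserve rank and the sum of entries, so any rank-one block can be contracted in turn. Your write-up merely makes explicit the details the paper leaves implicit, notably the check that blocks occupy disjoint rows and columns so the block structure survives each contraction.
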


\subsection{General Partial Matrices}\label{subsection:arbitrary_masks}

To discuss general partial matrices, we introduce graph notation used in the matrix completion literature, including for example \cite{KiralyTheranTomiokaUno}.

\begin{definition}
Let $M = {\bf u}^T {\bf v}$ be a matrix 
in $\Delta^{mn-1}$, and let ${\bf u}$ 
and ${\bf v}$ be vectors in $\Delta^{m-1}$ and $\Delta^{n-1}$ 
respectively. A bipartite graph can be associated to $M$ in the following way:

\vspace{5mm}

\centerline{
\begin{tabular}{|l|l|} \hline
{\bf Graph }& {\bf Matrix} \\ \hline
White vertex $r_i$ & $i$-th row \\
Black vertex $c_j$ & $j$-th column \\
Edge $(r_i c_j)$ & $(i,j)$-th entry \\
Weight $\omega(r_i)$ & Sum of $i$-th row, or $u_i$ \\
Weight $\omega(c_j)$ & Sum of $j$-th column, or $v_j$ \\
Edge weight $\omega(r_i c_j)$ & Value of entry $m_{ij}$ \\\hline
\end{tabular}
}
\vspace{3mm}
The bipartite graph $G(M)$ associated to a 
partial matrix $M$ is the graph obtained by 
deleting the edges corresponding to unobserved entries, 
and omitting vertex weights. If we want to talk about 
the corresponding unweighted graph, 
then we say the bipartite graph $G(S)$ associated to a pattern $S$.
\end{definition}

\begin{example}\label{example:probability_mask}
On the left is a partial matrix and on the right is the corresponding bipartite graph.

\begin{minipage}{.5\textwidth}
\begin{equation*}
\begin{pmatrix}
m_{11} & m_{12} & \\
 & m_{22} & \\
& & m_{33}
\end{pmatrix}
\end{equation*}
\end{minipage}
\begin{minipage}{.5\textwidth}
\includegraphics[scale=.8]{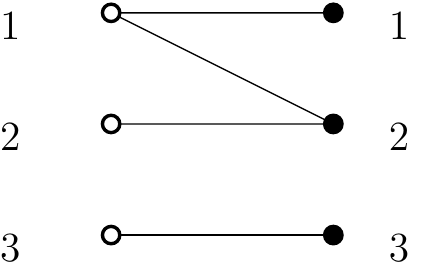}
\end{minipage}

\end{example}

In this formulation, the question of completability 
is equivalent to the existence of a vertex labeling so that 
the black vertex weights and white vertex weights each 
sum to one, and the edge weights satisfy $\omega(r_i c_j) 
= \omega(r_i) \omega(c_j)$. 
The edge set of the bipartite graph associated to a diagonal 
partial matrix forms a perfect matching of $K_{n,n}$. 
The bipartite graph associated to a block partial matrix 
is the union of disjoint complete bipartite graphs. 
We now consider more general partial matrices and 
bipartite graphs. For this we introduce a definition
and a lemma paraphrased from~\cite[Section 6]{CJRW89}:

\begin{definition}[\cite{CJRW89}]
Let $S$ be a pattern with corresponding bipartite graph 
$G(S)$. Call $C = \{(r_1,c_1),(r_1,c_2),(r_2,c_2),
\ldots,(r_k,c_k),(r_k,c_1)\} \subseteq S$ a cycle 
in $G(S)$ if $G(C)$ is a cycle in $G(S)$. 

Let $M$ be a partial matrix with pattern $S$, 
and let $m_{rc}$ be the entry in $M$ corresponding 
to the edge $(r,c)$ in $G(S)$. The partial matrix $M$ 
will be called singular with respect to a cycle $C$ in $G(S)$ if
\begin{equation}
m_{r_1 c_1} m_{r_2 c_2} \ldots m_{r_k c_k} = 
m_{r_1 c_2} m_{r_2 c_3} \ldots 
m_{r_{k-1} c_k} m_{r_k c_1}. \label{binomial}
\end{equation}
\end{definition}

\begin{lemma}[\cite{CJRW89}, Lemma 6.2]\label{lemma:rank_one_completion}
Let $M$ be a partial matrix subordinate to a pattern $S$ 
with all specified entries positive. Then the minimal 
rank over all completions of $M$ is one if and 
only if $M$ is singular with respect to all cycles in $G(S)$.
\end{lemma}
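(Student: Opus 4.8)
The plan is to prove the two directions of the equivalence separately, using the factorization structure of rank-one completions on cycles.

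First I would handle the easy direction ($\Rightarrow$): assume $M$ has a rank-one completion $\widetilde{M} = {\bf u}^T {\bf v}$. Then every fully-specified entry factors as $m_{rc} = u_r v_c$ (since the completion agrees with $M$ on $S$, and all entries of the completion factor this way). For any cycle $C = \{(r_1,c_1),(r_1,c_2),\ldots,(r_k,c_k),(r_k,c_1)\}$, I would substitute these factorizations into both sides of the singularity condition~\eqref{binomial}. The left-hand side becomes $\prod_{i=1}^k u_{r_i} v_{c_i}$ and the right-hand side becomes $u_{r_1} v_{c_2} \cdot u_{r_2} v_{c_3} \cdots u_{r_k} v_{c_1}$. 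Because each cycle visits every row-vertex $r_i$ exactly once and every column-vertex $c_j$ exactly once (on each side), both products equal $\prod_i u_{r_i} \cdot \prod_j v_{c_j}$, so the equation holds identically. Hence singularity with respect to every cycle is \emph{necessary}.

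The harder direction ($\Leftarrow$) is the main obstacle, and here I would lean on the cited source rather than reprove it from scratch, since this is Lemma 6.2 of~\cite{CJRW89}. The strategy is constructive: assuming $M$ is singular with respect to all cycles in $G(S)$, I would build vectors ${\bf u}, {\bf v}$ realizing a rank-one completion by propagating values across the connected components of $G(S)$. Fix a spanning forest of $G(S)$; on each tree, pick a root vertex, assign it weight $1$ (say a row-vertex $r$, so $u_r = 1$), and then for every tree-edge $(r,c)$ define the adjacent vertex weight by the ratio forced by $m_{rc} = u_r v_c$, i.e. $v_c = m_{rc}/u_r$, and continue recursively. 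Since all specified entries are positive, none of these divisions is degenerate, so the assignment is well-defined along the tree. The content of the lemma is then that the \emph{non-tree} edges are automatically consistent: for a non-tree edge $(r,c)$, the unique tree-path between $r$ and $c$ together with that edge forms a cycle $C$, and the singularity condition~\eqref{binomial} for $C$ is exactly the algebraic identity guaranteeing $u_r v_c = m_{rc}$. Thus every specified entry factors through the assigned vertex weights, giving a rank-one matrix whose restriction to $S$ equals $M$ (the unspecified entries are filled in by the same products), so the minimal completion rank is one.

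The delicate points I would flag are: (1) verifying that the cycle built from a tree-path plus one non-tree edge, after cancelling the repeated tree-edge factors, reduces precisely to the two-term product identity in~\eqref{binomial} — this requires tracking which edges appear on which side of the cycle equation and checking the telescoping cancellation along the path; and (2) confirming that positivity of the specified entries is genuinely used, both to make the recursive divisions legitimate and to ensure the rank is exactly one rather than zero. I would note that the simplex/nonnegativity constraints play \emph{no} role in this lemma — it is a purely affine rank statement over the specified positions — which is why it can be imported verbatim from~\cite{CJRW89} and why the present paper's contribution lies in layering the $\Delta^{mn-1}$ conditions on top of it in the subsequent results.
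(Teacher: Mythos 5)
Your proposal is correct, but note that the paper contains no proof of this lemma at all: it is imported verbatim from \cite[Lemma 6.2]{CJRW89}, and the paper only remarks afterwards that the completion procedure is implicit in that source's proof. Your sketch --- the factorization argument for necessity, and the spanning-forest propagation with cycle conditions guaranteeing consistency of non-tree edges for sufficiency --- is exactly the standard argument from the cited literature, and your closing observation that the simplex constraints play no role here (so the lemma serves only as a necessary condition to be layered under the $\Delta^{mn-1}$ analysis) matches the paper's own framing.
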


In Lemma~\ref{lemma:rank_one_completion}, 
a completion means any completion and not 
necessarily a rank-one completion in the standard 
simplex as in the rest of the paper. 
Lemma~\ref{lemma:rank_one_completion} 
gives a necessary condition for a rank-one completion 
to exist. 

The algorithm for completing the uniquely completable 
entries in a rank-one completion is given implicitly in the 
proof of~\cite[Lemma 6.2]{CJRW89} and stated explicitly in~\cite[Formula 6]{HHW06},~\cite[Theorem 34]{KiralyTheranTomiokaUno} and~\cite[Theorem 2.6]{KiralyTheran}. 
We describe the algorithm as it is written in the proof of~\cite[Lemma 6.2]{CJRW89}: 

\begin{algorithm}
Let $m_{i j}$ be an unspecified entry. Let
$ \{(r_1,c_1), (r_1, c_2), \ldots ,(r_k, c_k),$ $(r_k, c_1) \}$
be a cycle in $G(S \cup (i,j))$ which was not a cycle 
in $G(S)$. One of the edges in this cycle is $(i,j)$, 
say $(r_1,c_1)=(i,j)$. Let
$p:=m_{r_2 c_2} m_{r_3 c_3} \ldots m_{r_k c_k},$ and
$q:=m_{r_1 c_2} m_{r_2 c_3} \ldots m_{r_k c_1}$. 
Then, set
\begin{equation}
m_{i j} =\frac{q}{p}=\frac{m_{r_1 c_2} m_{r_2 c_3} 
\ldots m_{r_k c_1}}{m_{r_2 c_2} m_{r_3 c_3} \ldots 
m_{r_k c_k}}. \label{rational_function}
\end{equation}
\end{algorithm}

The following corollary in~\cite{HHW06,KiralyTheranTomiokaUno,
KiralyTheran} establishes which entries of a partial matrix 
are uniquely reconstructible by cycles if the 
partial matrix does not contain zeros:

\begin{corollary}[\cite{HHW06}, Corollary 6, \cite{KiralyTheranTomiokaUno}, 
Theorem 34, \cite{KiralyTheran}, Theorem 2.4]
\label{thm:rank_one_unique_completability}
\label{thm:completable_entries}
Let $M$ be a partial matrix subordinate to a pattern $S$ with all specified entries non-zero. 
The set of uniquely reconstructible entries of $M$ 
is exactly the set $m_e$ with $e$ in the transitive 
closure of $G(S)$. In particular, all of $M$ is 
reconstructible if and only if $G(S)$ is connected.
\end{corollary}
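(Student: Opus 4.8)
The plan is to obtain this Corollary by combining Lemma~\ref{lemma:rank_one_completion} and the reconstruction formula of the Algorithm for the ``reconstructible'' direction, and adding a scaling argument for the converse. First I would restate the transitive closure in graph terms: an edge $e=(i,j)$ lies in the transitive closure of $G(S)$ precisely when the white vertex $r_i$ and the black vertex $c_j$ lie in the same connected component of $G(S)$, equivalently when there is a simple path from $r_i$ to $c_j$ using only specified edges. With this reformulation the claim becomes: the entry $m_{ij}$ takes the same value in every rank-one completion of $M$ if and only if $r_i$ and $c_j$ are connected in $G(S)$.

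For the forward direction, suppose $e=(i,j)$ is in the transitive closure and fix a simple path $P$ from $r_i$ to $c_j$ in $G(S)$. Since $r_i$ is white and $c_j$ is black, $P$ has odd length, so appending the edge $(i,j)$ yields an even cycle $C$ in $G(S\cup\{(i,j)\})$ that was not a cycle of $G(S)$. Every edge of $C$ other than $(i,j)$ is specified and positive, so the denominator $p$ in \eqref{rational_function} is nonzero. Now let $N$ be any rank-one completion of $M$. Directly from the factorization $N={\bf u}^T{\bf v}$ (equivalently, by Lemma~\ref{lemma:rank_one_completion} applied to the fully specified $N$), the matrix $N$ is singular with respect to every cycle, and in particular the binomial \eqref{binomial} for $C$ forces $N_{ij}=q/p$. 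This value depends only on the specified entries, hence is the same across all completions, so $m_{ij}$ is uniquely reconstructible.

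For the converse, suppose $r_i$ and $c_j$ lie in different components, and let $N={\bf u}^T{\bf v}$ be a rank-one completion of $M$ (one exists, or else there is nothing to reconstruct), chosen so that $u_i\neq 0$ and $v_j\neq 0$; this is possible because a weight incident to a positive specified entry is automatically nonzero, and an isolated vertex carries a free weight that we may take nonzero. Let $A$ be the component containing $r_i$. For $t>0$ define rescaled factors by multiplying $u_k$ by $t$ for every row with $r_k\in A$ and dividing $v_l$ by $t$ for every column with $c_l\in A$, leaving all remaining coordinates unchanged. Each specified edge joins two vertices of a common component, so every specified product $u_k v_l$ is preserved, and the rescaled matrix is again a rank-one completion of $M$. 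However $r_i\in A$ while $c_j\notin A$, so the $(i,j)$ entry is multiplied by $t$, producing distinct values $t\,u_i v_j$ as $t$ varies. Hence $m_{ij}$ is not uniquely reconstructible. Combining the two directions yields the characterization, and the final assertion follows because $G(S)$ is connected exactly when every pair $(r_i,c_j)$ lies in a common component, i.e.\ when every entry lies in the transitive closure.

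The main obstacle is the converse: one must simultaneously secure a starting completion and ensure that the rescaling actually moves the $(i,j)$ entry. This is why I would take care to select $N$ with $u_i,v_j\neq 0$, and to verify that multiplying by $t$ on one side of a component and dividing by $t$ on the other leaves all specified entries fixed while separating the scalings of the two distinct components containing $r_i$ and $c_j$.
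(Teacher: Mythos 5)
Your proof is correct, and it is worth noting that the paper itself gives no proof of this corollary at all: it is imported verbatim from the cited references (\cite{HHW06}, \cite{KiralyTheranTomiokaUno}, \cite{KiralyTheran}), so there is no internal argument to compare against. What you have written is essentially the standard proof from that literature, and it is fully consistent with the machinery the paper does set up: your forward direction is exactly the observation that any rank-one matrix is singular with respect to every cycle, so the cycle through a path in $G(S)$ plus the edge $(i,j)$ forces the value given by Formula~(\ref{rational_function}) --- this is the content of the paper's Algorithm --- while your converse supplies the piece the paper leaves to the references, namely a component-wise torus rescaling ($u_k \mapsto t u_k$, $v_l \mapsto v_l/t$ on one connected component) that fixes all specified entries but moves any entry joining two distinct components. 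Two small points of hygiene: the corollary assumes the specified entries are \emph{non-zero}, not positive, so you should say ``non-zero'' where you wrote ``positive'' (nonvanishing of the denominator $p$ is all you need, so nothing breaks); and your parenthetical handling of existence is the right one to make explicit, since ``uniquely reconstructible'' is only meaningful once $M$ admits a rank-one completion, i.e.\ once $M$ is singular with respect to all cycles as in Lemma~\ref{lemma:rank_one_completion} --- which is precisely the hypothesis under which the paper invokes this corollary in the proof of Theorem~\ref{main_theorem}.
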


The transitive closure $\overline{G}$ of a bipartite graph $G$ 
is the graph that is obtained from $G$ by 
replacing each connected component by a 
complete bipartite graph on the vertices that 
belong to the connected component. 
Suppose that a partial matrix $M$ has all positive 
entries and is singular with respect to cycles in $G(S)$, 
the completability of $M$ is equivalent to the 
completability of $M$ together with its uniquely 
reconstructible entries. By Corollary~\ref{thm:rank_one_unique_completability},
this is a partial matrix that consists of 
non-zero blocks.

\begin{theorem}\label{main_theorem}
Let $X$ be the set of $m\times n$ positive partial matrices 
$M$ subordinate to the pattern $S$ that can be completed to a rank-one matrix
in $\Delta^{mn-1}$.
Define entries corresponding to the edges in the transitive closure $\overline{G(S)}$
using Formula~(\ref{rational_function}).
Let $s$ be the number of connected components of $G(S)$ that contain at least one edge,
let $K_i$ be the $i$-th block of $\overline{G(S)}$, and let $b_i$ 
be the sum of the weights in $K_i \subset \overline{G(S)}$. 
\vspace{-2mm}
\begin{enumerate}  \setlength{\itemsep}{-5pt}
\item If $\overline{G(S)}=K_{m,n}$, the defining constraints for $X$
are: \begin{enumerate} \setlength{\itemsep}{0pt} \vspace{-2mm}
 \item the ideal generated by relations~(\ref{binomial}) corresponding to cycles of $S$, and $b_1 = 1$; and
 \item the inequalities $m_{ij} > 0$ for all $(i,j) \in S$.
 \end{enumerate}  
 \item Otherwise the defining constraints for $X$
are: \begin{enumerate} \setlength{\itemsep}{0pt} \vspace{-2mm}
 \item the ideal generated by relations~(\ref{binomial}) corresponding to cycles of $S$; and
 \item the inequalities $m_{ij} > 0$ for all $(i,j) \in S$, and $\sum_{l = 1}^s \sqrt{b_l}\leq 1$.
 \end{enumerate}
\end{enumerate}
\end{theorem}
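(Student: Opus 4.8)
The plan is to assemble Theorem~\ref{main_theorem} from the three ingredients already developed in this section, reducing the general case to the block case and then to the diagonal case. The starting observation is Lemma~\ref{lemma:rank_one_completion}: a positive partial matrix $M$ admits \emph{some} rank-one completion (over $\mathbb{R}$, ignoring the simplex constraints) if and only if $M$ is singular with respect to every cycle of $G(S)$, i.e. the binomial relations~(\ref{binomial}) all vanish. This immediately forces the ideal generated by these relations into both parts of the statement as a necessary condition; the inequalities $m_{ij}>0$ are necessary by hypothesis. So the content lies in showing these conditions are also sufficient, and in identifying the remaining scalar constraint (either $b_1=1$ or $\sum_l\sqrt{b_l}\le 1$).

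First I would argue that once the binomial relations hold, we may pass from $M$ to its completion along the transitive closure $\overline{G(S)}$ using Formula~(\ref{rational_function}). By Corollary~\ref{thm:rank_one_unique_completability}, the entries indexed by edges of $\overline{G(S)}$ are exactly the uniquely reconstructible ones, and they are all positive (being quotients of products of positive entries); moreover the reconstructed matrix is still singular with respect to all cycles, so completability of $M$ is equivalent to completability of this enlarged partial matrix. The key structural point is that the enlarged matrix is precisely a \emph{block} partial matrix: each connected component $K_i$ of $G(S)$ becomes, under transitive closure, a complete bipartite graph, which corresponds to a positive rank-one block of specified entries. Here I would verify that each such block genuinely has rank one, which follows from the cycle-singularity condition restricted to that component.

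Next, invoking the block reductions (Corollary~\ref{cor:completability_block} and its successor), completability of this block partial matrix is equivalent to completability of the diagonal partial matrix $\widehat M$ whose $i$-th diagonal entry is $b_i=\sum_{e\in K_i}\omega(e)$, the sum of weights in the $i$-th block. The number of nontrivial diagonal entries is $s$, the number of components containing at least one edge. Now Theorem~\ref{halfnorm} finishes the diagonal case: $\widehat M$ is completable if and only if $\sum_{l=1}^s\sqrt{b_l}\le 1$, which is exactly the inequality in case (2). I would take care to separate out the degenerate diagonal cases $s=0$ and $s=1$, where the half-norm criterion must be read correctly.

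The remaining subtlety, and the step I expect to be the main obstacle, is case (1): when $\overline{G(S)}=K_{m,n}$, the graph is connected, every entry is reconstructible, and the reduction produces a $1\times 1$ diagonal matrix with single entry $b_1$. The $1\times 1$ case is completable if and only if the entry equals one, so the inequality $\sum\sqrt{b_l}\le 1$ collapses to the equality $b_1=1$; I would justify that the half-norm inequality with a single summand is genuinely an equation here, since $\sqrt{b_1}\le 1$ together with the requirement that the total mass sum to one forces $b_1=1$ exactly. Care is needed to explain why the $1\times 1$ situation is an equality rather than an inequality, i.e. that when the whole matrix is determined there is no free parameter to absorb slack, and to confirm that $b_1$ is well defined and independent of the choices made in applying Formula~(\ref{rational_function}), which again rests on cycle-singularity.
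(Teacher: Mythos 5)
Your overall route is the same as the paper's: necessity of the cycle relations via Lemma~\ref{lemma:rank_one_completion}, reconstruction of the entries on $\overline{G(S)}$ via Formula~(\ref{rational_function}) and Corollary~\ref{thm:rank_one_unique_completability}, block-to-diagonal contraction via Corollary~\ref{cor:completability_block}, and finally Theorem~\ref{halfnorm}. Your explicit treatment of case (1) --- the connected, spanning case collapsing to the $1\times 1$ base case and forcing $b_1=1$ --- is actually spelled out more carefully than in the paper's own proof. However, there is one genuine gap: you never handle rows and columns of $M$ containing no specified entries at all, i.e.\ isolated vertices of $G(S)$. When these exist, the matrix obtained after reconstructing $\overline{G(S)}$ is not a block partial matrix in the paper's sense (that definition requires every row and column to meet the pattern), and the contraction of Corollary~\ref{cor:completability_block} produces a diagonal partial matrix with some \emph{empty} rows and columns, to which Theorem~\ref{halfnorm} does not apply as stated (it requires all diagonal entries specified and positive). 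This is exactly where the paper's proof inserts an extra step: a completion of the fully specified $s\times s$ diagonal matrix $\operatorname{diag}(b_1,\ldots,b_s)$ extends to the original pattern by filling the empty rows and columns with zeros; conversely, a completion of the original yields a rank-one nonnegative completion of the $s \times s$ matrix with total sum $\leq 1$, which the continuity/minimum argument from the proof of Theorem~\ref{halfnorm} (the minimum of $f$ being $\bigl(\sum_l \sqrt{b_l}\bigr)^2$) upgrades to a completion of sum exactly one.

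Relatedly, your caveat that only the ``degenerate diagonal cases $s=0$ and $s=1$'' need special reading misdiagnoses where the problem lives. Isolated vertices can accompany any value of $s$, and $s=1$ is delicate in case (2) only \emph{because} isolated vertices must then be present: a connected $G(S)$ with no isolated vertices is precisely case (1). It is the empty rows and columns, not the size of $s$, that supply the slack turning the condition into the inequality $\sum_{l}\sqrt{b_l}\leq 1$ rather than an equality. A concrete failure: for $S=\{(1,1)\}$ inside a $2\times 1$ matrix, the correct condition is $m_{11}\leq 1$ (put the leftover mass $1-m_{11}$ in the empty row), whereas your reduction to a $1\times 1$ diagonal matrix would report $m_{11}=1$. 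Once you add the zero-padding and continuity step, your argument coincides with the paper's.
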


\begin{proof}
In both parts the ideal of relations corresponding to cycles of $S$
ensures that the conditions of Lemma~\ref{lemma:rank_one_completion} are satisfied. By Corollary~\ref{thm:rank_one_unique_completability} and
Corollary~\ref{cor:completability_block}, we can reduce to the diagonal partial matrix
case with possibly some empty rows and columns.
A completion of the matrix with empty rows and columns removed gives a completion of the 
original matrix by setting all the entries in the empty rows and columns equal to zero. 
A completion of the original matrix gives a completion of the matrix with empty rows and columns removed
with the sum of entries $\leq 1$. By the continuity argument as in the proof of Theorem~\ref{halfnorm}, 
this matrix has a desired completion. Finally we apply Theorem~\ref{halfnorm}. 
\end{proof}

Note that these defining equations and inequalities include
rational functions and algebraic functions like square root.
The Tarski-Seidenberg Theorem ensures that these can
be converted into polynomial equations and inequalities; we will
explore this in more detail in Section~\ref{subsection:semialgebraic_set}.

\begin{example}
The partial matrices of pattern $S = \{(1,1),(1,2),(2,1),(3,3)\}$ 
with all observed entries positive has a completion if and only if
\[\sqrt{m_{11}+m_{12}+m_{21}+m_{12}m_{21}/m_{11}}+\sqrt{m_{33}} \leq 1.\]
This is equivalent to the conditions
\begin{align}
(m_{11}+m_{12}+m_{21}+m_{12}m_{21}/m_{11}+m_{33}-1)^2
-4(m_{11}+m_{12}+m_{21}+m_{12}m_{21}/m_{11})m_{33} & \geq  0, \nonumber\\
m_{11}+m_{12}+m_{21}+m_{12}m_{21}/m_{11}+m_{33} & \leq  1.\nonumber
\end{align}
By clearing the denominators, we get polynomial 
inequalities in the observed entries whose solutions 
are all completable partial matrices of pattern $S$.
\end{example}

\subsection{Boundary of the Set of Completable Matrices} \label{subsection:boundary}

As promised, we discuss the set of partial matrices
which have some specified entries equal to zero. Fix a pattern $S$ and let $Z \subset S$ be the subset of
entries set to zero. Specifically, $m_{ij} = 0$ for $(i,j) \in Z$, and 
$m_{ij} > 0$ for $(i,j) \notin Z$.

\begin{proposition}
A diagonal partial $n\times n$ matrix $\text{diag}(a_1, \ldots ,a_n)$ is completable to a rank-one matrix in $\Delta^{nn-1}$ if and only if
$a_{i} \geq 0$ for $i = 1,\ldots,n$ and $\sum_{i = 1}^n \sqrt{a_{i}} \leq 1$.
\end{proposition}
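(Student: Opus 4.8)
The plan is to reduce the general statement with possible zeros on the diagonal to the strictly positive case already handled by Theorem~\ref{halfnorm}, treating the vanishing entries separately. The key observation is that a diagonal entry $a_i=0$ forces the corresponding factorization entry to vanish: if $M={\bf u}^T{\bf v}$ is a rank-one completion with $m_{ii}=u_i v_i = 0$, then at least one of $u_i, v_i$ must be zero. First I would split the index set into $I_+=\{i : a_i>0\}$ and $I_0=\{i:a_i=0\}$, and argue that for $i\in I_0$ we may take $u_i=0$ (and leave $v_i$ free, or symmetrically). This lets us discard the zero rows and columns and restrict attention to the principal submatrix indexed by $I_+$.

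For the forward direction, I would start from a completion ${\bf u}^T{\bf v}\in\Delta^{nn-1}$ and restrict to the coordinates in $I_+$. The entries $u_i, v_i$ for $i\in I_+$ are strictly positive (since $a_i=u_iv_i>0$), and the sum of all entries of the full matrix is $(\sum_i u_i)(\sum_i v_i)=1$; dropping the zero indices only decreases both factors, so $(\sum_{i\in I_+} u_i)(\sum_{i\in I_+} v_i)\le 1$. After rescaling ${\bf u}|_{I_+}$ and ${\bf v}|_{I_+}$ to lie in the appropriate simplices, the diagonal principal submatrix $\operatorname{diag}(a_i : i\in I_+)$ is completable to a rank-one matrix whose entries sum to something $\le 1$; by the continuity argument used in Theorem~\ref{halfnorm}, this is enough to conclude $\sum_{i\in I_+}\sqrt{a_i}\le 1$, which is exactly $\sum_{i=1}^n\sqrt{a_i}\le 1$ since the zero terms contribute nothing.

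For the converse, suppose $a_i\ge 0$ for all $i$ and $\sum_{i=1}^n\sqrt{a_i}\le 1$. Then $\sum_{i\in I_+}\sqrt{a_i}\le 1$ as well, so by Theorem~\ref{halfnorm} the positive principal submatrix $\operatorname{diag}(a_i : i\in I_+)$ admits a rank-one completion ${\bf u}'^T{\bf v}'$ in the corresponding simplex. I would then extend this to the full $n\times n$ matrix by assigning weight $0$ to both $u_i$ and $v_i$ for every $i\in I_0$; the resulting vectors still sum to one, the product is rank one and nonnegative, its entries sum to one, and it reproduces the required diagonal, giving a completion in $\Delta^{nn-1}$.

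The main obstacle is handling the degenerate edge cases cleanly: when $I_+$ is empty (all $a_i=0$, where one must exhibit an explicit completion such as a single nonzero row-column product), when $|I_+|=1$ (so Theorem~\ref{halfnorm} does not literally apply since it assumes $n\ge 2$, and the lone entry must satisfy $a_i\le 1$, matching $\sqrt{a_i}\le 1$), and when placing the zero weights one must verify that the completed vectors can genuinely be normalized into $\Delta^{n-1}$ without dividing by zero. These are all routine but require care to state, and the reduction itself is the conceptual heart of the argument.
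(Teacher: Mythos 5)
Your overall strategy---split the indices into $I_+$ and $I_0$, apply Theorem~\ref{halfnorm} to the positive principal submatrix, and pad with zeros---is exactly the paper's proof for the case of at least two positive entries (the paper's case $|Z|\le n-2$). Your necessity argument is also workable, though more laborious than the paper's one-line appeal to Cauchy--Schwarz, $\sum_i \sqrt{a_i} = \sum_i \sqrt{u_i v_i} \le \sqrt{\sum_i u_i}\,\sqrt{\sum_i v_i} = 1$, and it needs one correction: rescaling \emph{both} ${\bf u}|_{I_+}$ and ${\bf v}|_{I_+}$ into simplices changes the diagonal entries to $a_i/(\sigma_u\sigma_v)$, so the rescaled matrix is no longer a completion of $\operatorname{diag}(a_i : i\in I_+)$. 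You should instead rescale asymmetrically (replace ${\bf u}|_{I_+}$ by ${\bf u}|_{I_+}/\sigma_u$ and ${\bf v}|_{I_+}$ by $\sigma_u {\bf v}|_{I_+}$, where $\sigma_u=\sum_{i\in I_+}u_i$), which preserves the diagonal, puts the first factor in the simplex, and makes the entries sum to $\sigma_u\sigma_v\le 1$; then the minimization computation from Theorem~\ref{halfnorm} gives $(\sum_{i\in I_+}\sqrt{a_i})^2\le\sigma_u\sigma_v\le 1$.

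The genuine gap is the case $|I_+|=1$ with $0<a_1<1$, which you flag but incorrectly describe as routine. Your sufficiency construction sets $u_i=v_i=0$ for every $i\in I_0$, which confines all mass of the completion to the single entry $(1,1)$; the total sum is then $a_1<1$, so \emph{no} completion of this form exists---this is not a normalization issue but an impossibility within your construction. Any valid completion must place the surplus mass $1-a_1$ outside the positive block, i.e., some index in $I_0$ must receive positive weight in exactly one of the two factors. This is precisely why the paper treats $|Z|=n-1$ as a separate case, with the asymmetric choice ${\bf u}=(1,0,\ldots,0)$, ${\bf v}=(a_1,\,1-a_1,\,0,\ldots,0)$, whose product is rank one, has diagonal $(a_1,0,\ldots,0)$, and sums to one. (Your handling of $I_+=\varnothing$ by a single off-diagonal entry equal to one is fine and matches the paper's case $|Z|=n$.) Interestingly, your own first paragraph contains the key observation needed here---$u_iv_i=0$ forces only \emph{one} of $u_i,v_i$ to vanish---but your construction discards that freedom by zeroing both; restoring it, and supplying the explicit completion above, closes the gap and makes your proof coincide with the paper's.
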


\begin{proof} Necessity of the conditions follows by the Cauchy-Schwarz
inequality; sufficiency is
given case by case:
For $|Z| \leq n-2$, all rows and columns containing zeros may be
set to zero, and the remaining submatrix can be completed by Theorem
\ref{halfnorm}.  For $|Z| = n-1$, without loss of generality 
take $a_{1} > 0$. Set
\vspace{-2mm}
\[ {\bf u} = \begin{pmatrix} 1 & 0 & \cdots & 0 \end{pmatrix}, \text{  and  }
 {\bf v} = \begin{pmatrix} a_{1} & 1- a_1 & 0 &\cdots & 0 \end{pmatrix}. \]
 \vspace{-7mm}
 
\noindent ${\bf u}^T{\bf v}$ will be a completion of the matrix.
For $|Z| = n$, simply set some off-diagonal entry to one and all other
entries to zero.
\end{proof}

The strategy used in this proof is our general approach to dealing
with zeros. We reduce to a smaller positive submatrix by removing
some rows and columns. We use the same approach to pass to
general submatrices; first we cite another definition from
\cite{CJRW89}.

\begin{definition}[\cite{CJRW89}]
We say that $L \subseteq S$ is a 3-line in $G(S)$ if $L$ is of the form
$$\{(r_1,c_1),(r_1,c_2),(r_2,c_1)\}$$
where $r_1 \neq r_2, c_1 \neq c_2$ and $(r_2,c_2) \not \in S$. 
In other words, $G(L)$ is a line consisting of three edges in $G(S)$.
The matrix will be called singular with respect to a 3-line 
$L$ in $G(S)$ if either $m_{r_1 c_1} \neq 0$ or
 $m_{r_1 c_2} m_{r_2 c_1}=0$ (or both). This is equivalent to the 
\textit{zero row or column property} in~\cite{HHW06} that states that if any 
entry of a rank-one matrix $M$ is zero, then either every other entry
of $M$ in the same row is zero or every other entry in the same column
is zero.
\end{definition}

\begin{lemma}[\cite{HHW06}, Lemma 1]\label{lemma:zeros}
If a partial matrix has a rank-one completion, then it is singular with 
respect to 3-lines or equivalently it has the zero row or column property.
\end{lemma}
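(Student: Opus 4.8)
The plan is to prove the contrapositive, or equivalently to argue directly that a rank-one completion forces the zero-row-or-column property at every relevant configuration. First I would fix a rank-one completion $M = {\bf u}^T{\bf v}$ of the partial matrix, so that every entry (specified or not) satisfies $m_{ij} = u_i v_j$. The key observation is that being singular with respect to a 3-line $L = \{(r_1,c_1),(r_1,c_2),(r_2,c_1)\}$ is exactly the condition ``$m_{r_1c_1}\neq 0$, or $m_{r_1c_2}m_{r_2c_1}=0$.'' So I would assume the hypothesis of the implication fails, i.e. $m_{r_1c_1}=0$ \emph{and} $m_{r_1c_2}m_{r_2c_1}\neq 0$, and derive a contradiction with the rank-one factorization.

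The core computation is short. From $m_{r_1c_1}=u_{r_1}v_{c_1}=0$ we learn that either $u_{r_1}=0$ or $v_{c_1}=0$. In the first case $m_{r_1c_2}=u_{r_1}v_{c_2}=0$, contradicting $m_{r_1c_2}\neq 0$; in the second case $m_{r_2c_1}=u_{r_2}v_{c_1}=0$, contradicting $m_{r_2c_1}\neq 0$. Either way we obtain a contradiction, which shows that the negation is impossible and hence $M$ is singular with respect to $L$. Since $L$ was an arbitrary 3-line, this holds for all 3-lines, giving the claim. The equivalence with the zero-row-or-column property is essentially definitional: reading the statement ``if any entry $m_{r_1c_1}$ is zero then every other entry in row $r_1$ is zero or every other entry in column $c_1$ is zero'' against the factorization reproduces precisely the case split above, so I would only need to remark that the two formulations are interchangeable rather than prove them separately.

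The main thing to be careful about, rather than a genuine obstacle, is that the completion $M$ is a full matrix while the 3-line only constrains the three specified positions together with the empty position $(r_2,c_2)\notin S$; I should make clear that the argument uses the factorization of the \emph{completed} matrix, so that $u_i,v_j$ are well defined at all four positions even though $(r_2,c_2)$ is unobserved in $M$. Because the factorization propagates a zero across an entire row or column, the vanishing of a single product of vector coordinates is what does all the work, and no summing-to-one or nonnegativity hypothesis is needed here. Thus the proof is a one-line case analysis on which factor of $u_{r_1}v_{c_1}$ vanishes, and the only bookkeeping is translating between the 3-line phrasing and the zero-row-or-column phrasing.
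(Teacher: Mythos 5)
Your proof is correct. The paper does not actually prove this lemma --- it is quoted verbatim from \cite{HHW06} (Lemma 1) --- but your argument, factoring the rank-one completion as an outer product ${\bf u}^T{\bf v}$ and observing that $u_{r_1}v_{c_1}=0$ forces either the whole row $r_1$ or the whole column $c_1$ to vanish, is exactly the standard proof of that cited result, so it matches the intended reasoning and needs no correction.
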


\begin{definition}
Let $S \subset [m] \times [n]$, $I \subset [m]$ and $J \subset [n]$. 
Let $X(I,J;S)$ be the subset of $\mathbb{R}^S$
such that: \begin{enumerate} \setlength{\itemsep}{-2pt}
\item for $(i,j) \in S$, $m_{ij} = 0$ if and only if $i \in I$ or
$j \in J$, and 
\item for the pattern $S'$ induced by restricting $S$ to
$([m] \setminus I) \times ([n] \setminus J)$ 
the conditions of Theorem~\ref{main_theorem} are satisfied.
\end{enumerate}
\end{definition}

\begin{theorem} \label{main_theorem2}
Let $S\subset [m] \times [n]$ be a pattern of specified entries. 
Then the set of partial matrices subordinate to pattern $S$ which can be completed to a rank-one matrix
in $\Delta^{mn-1}$ is given by

\[
\bigcup_{I,J} \: X(I,J;S) :  I \subsetneq [m], J \subsetneq [n].
\]
\end{theorem}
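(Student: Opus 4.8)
The plan is to prove the two set inclusions separately, in each case passing between a full completion and its positive ``core'' submatrix. The engine for the forward inclusion is that the zero pattern of a rank-one matrix in $\Delta^{mn-1}$ is governed entirely by the supports of its two factors; the engine for the reverse inclusion is that a completion of a positive submatrix can be padded with zeros to a completion of the whole matrix.

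First, for ($\subseteq$), suppose $M$ is completable and fix a completion $\widetilde{M}={\bf u}^T{\bf v}$ with ${\bf u}\in\Delta^{m-1}$ and ${\bf v}\in\Delta^{n-1}$, as in the proof of Theorem~\ref{halfnorm}. I would set $I=\{i: u_i=0\}$ and $J=\{j: v_j=0\}$; since ${\bf u}$ and ${\bf v}$ sum to one, neither support is empty, so $I\subsetneq[m]$ and $J\subsetneq[n]$. For $(i,j)\in S$ we have $m_{ij}=\widetilde{m}_{ij}=u_i v_j$, which vanishes exactly when $i\in I$ or $j\in J$; this is condition (1) in the definition of $X(I,J;S)$, and is precisely the zero row or column property of Lemma~\ref{lemma:zeros}. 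For condition (2), I would restrict $\widetilde{M}$ to rows $[m]\setminus I$ and columns $[n]\setminus J$: on this block both factors are strictly positive, and since every deleted row and column of $\widetilde{M}$ is identically zero, the block carries all of the mass and is therefore a positive rank-one matrix in its own simplex. Hence its restriction to the induced pattern $S'$ is a completable positive partial matrix, so by Theorem~\ref{main_theorem} it satisfies the conditions listed there. Thus $M\in X(I,J;S)$.

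Conversely, for ($\supseteq$), take $M\in X(I,J;S)$. Condition (2) together with Theorem~\ref{main_theorem} produces a rank-one completion ${\bf u'}^T{\bf v'}$, with both factors positive and summing to one, of the restriction of $M$ to $([m]\setminus I)\times([n]\setminus J)$. I would extend ${\bf u'}$ and ${\bf v'}$ to vectors ${\bf u}$ and ${\bf v}$ on $[m]$ and $[n]$ by assigning the value $0$ to the coordinates indexed by $I$ and $J$; the extensions still sum to one, so $\widetilde{M}={\bf u}^T{\bf v}\in\Delta^{mn-1}$ and has rank at most one. On $S'$ it agrees with $M$ by construction, while on the remaining specified entries both $m_{ij}$ (by condition (1)) and $\widetilde{m}_{ij}$ (because $u_i=0$ or $v_j=0$) vanish; hence $\widetilde{M}$ is a genuine completion of $M$.

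The proof requires no new ideas beyond Lemma~\ref{lemma:zeros} and Theorem~\ref{main_theorem}, so the only delicate points are bookkeeping ones. The step I expect to need the most care is the degenerate case $S'=\emptyset$, i.e.\ when every specified entry of $M$ is zero: here Theorem~\ref{main_theorem} has no content, but because $I\subsetneq[m]$ and $J\subsetneq[n]$ there exists a position $(i_0,j_0)$ with $i_0\notin I$ and $j_0\notin J$, which is necessarily unspecified, and placing all the mass at $(i_0,j_0)$ yields the required completion. One should also verify that the union genuinely ranges over proper subsets $I\subsetneq[m]$ and $J\subsetneq[n]$ --- exactly what the nonemptiness of the supports of ${\bf u}$ and ${\bf v}$ forces in the forward direction --- so that the construction always lands in the simplex and no spurious all-zero ``completion'' is admitted.
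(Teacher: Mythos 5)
Your proof is correct and takes essentially the same route as the paper: cover the zeros of a completion by rows $I$ and columns $J$ (the zero row or column property of Lemma~\ref{lemma:zeros}), reduce to the positive submatrix on $([m]\setminus I)\times([n]\setminus J)$, and apply Theorem~\ref{main_theorem} --- the paper's own proof is just a two-sentence compression of this, while you additionally spell out both inclusions and the degenerate case $S'=\emptyset$. Your only (harmless) overclaim is that Theorem~\ref{main_theorem} yields a completion with strictly positive factors; it need not (e.g.\ when the induced pattern has empty rows or columns), but your zero-padding argument never actually uses that positivity.
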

\begin{proof}
By Lemma~\ref{lemma:zeros}, a partial matrix $M$
is completable if there exist rows $I \subset [m]$ and columns $J
\subset [n]$ that contain all the zeros of the partial matrix
$M$  and the partial matrix obtained after removing the rows in $I$
and columns in $J$ is completable. Finally we apply Theorem~\ref{main_theorem}.
\end{proof}

\begin{example}
Let $S = \{ (1,1), (1,2), (2,2), (2,3) \}$ be a pattern as a subset
of $2 \times 3$ matrices. In Table \ref{table:boundary}, the
entry in row $I$ and column $J$ is a letter referring to
a defining equation or inequality (up to change of index).

\begin{table}[h!]
\[
\begin{array}{|c|ccccccc|} \hline
		   & \varnothing & \{1\} 	& \{2\} 	& \{3\} 	& \{1,2\} 	& \{1,3\} 	& \{2,3\} 	\\ \hline
\varnothing & A			& B		& C		& B		& D		& E		& D		\\
\{1\}		   & F			& E		& D		& D		& G		& G		& --		\\
\{2\}		   & F			& D		& D		& E		& --		& G		& G		\\ \hline
\end{array}
\]
\caption{Types of $X(I,J;S)$.}
\label{table:boundary}
\end{table} 

\vspace{-9mm}
\[
\begin{array}{cllcl}
A: & (m_{11} + m_{12} + m_{22} + m_{23} - 1)m_{12}m_{22} + m_{11}m_{22}^2 + m_{12}^2 m_{23} = 0& &E: & m_{12} + m_{22} = 1 \\
B: & (m_{12} + m_{22} + m_{23} - 1)m_{22} + m_{12} m_{23} = 0 & & F: & m_{22} + m_{33} \leq 1 \\
C: & \sqrt{m_{11}} + \sqrt{m_{23}} \leq 1& & G: & m_{23} = 1 \\
D: & m_{23} \leq 1 \\
\end{array}
\]

For instance, the entry in row $\varnothing$, column $\{2\}$
corresponds to setting no rows equal to zero and the 
second column equal to zero. The resulting set
is cut out by:
\[ m_{12} = m_{22} = 0, m_{11} > 0, m_{23} > 0 \text{ and } \sqrt{m_{11}} + \sqrt{m_{23}} \leq 1.\]

\end{example}
The geometry gives rise to an algorithm for deciding completability:

\begin{algorithm}[Decide completability for a partial matrix $M$] \hspace{2mm} \newline
\vspace{-6mm}

\begin{enumerate} \setlength{\itemsep}{-3pt}
\item Translate $M$ into the corresponding bipartite graph 
$G$ including edge weights.

\item Check whether $G$ is singular with respect to 3-lines and cycles. If a 3-line or a cycle fails, return ``NO''.

\item Execute rank-one completion using 3-lines. Remove vertices that are connected to every vertex in a partite set with weight zero.

\item Execute rank-one completion using cycles. 
Let the graph obtained after this step have $s$ connected components $C_1,\ldots, C_s$.

\item If $s=1$ and the edge weights add up to one, return ``YES". Else, return ``NO".

\item  Let $S = \sum_{i = 1}^s \sqrt{b_i}$ where $b_i$ is the sum of the entries in $C_i$. If $S > 1$, return ``NO". Else, return ``YES".
\end{enumerate}
\end{algorithm}

\begin{remark}
This algorithm runs in polynomial time in $O(m^2n^2)$. 
Feasibility check and completion by cycles for each entry of the matrix runs in linear time in the number of observed entries by \cite[Algorithm~2]{KiralyTheran}. Feasibility check and completion by 3-lines is $O(E(m+n))$, where $E$ is the number of observed entries: For each zero one checks whether there is a non-zero entry in the column and in the row containing it. If there is a non-zero entry in both, then the partial matrix is not singular with respect to 3-lines. If there is a non-zero entry in the row, then complete all the entries in the column to zero (and vice-versa). If neither row nor column contains non-zero entries, then do nothing.
\end{remark}

\section{The Set of Completions}

\label{section:completions}

Throughout Section \ref{section:completability}, we
needed to prove the existence of a rank-one completion 
in the standard simplex for a partial matrix. Now,
we assume existence of such a completion, and seek to describe
all completions.  For the beginning of this discussion,
we assume 
no coordinates of $M$ are
zero. 
This case will be addressed in Section \ref{subsection:boundary2}.

\subsection{Completions for Positive Partial Matrices} \label{subsection:pos_completions}

\begin{theorem} \label{theorem:completions}
Let $M$ be a completable partial matrix with positive entries,
subordinate to pattern $S$. 
Let $\overline{M}$ be the block-diagonal partial matrix obtained
by completing $M$ with Formula~(\ref{rational_function}). Let $s$ be
the number of blocks in $\overline{M}$. Let $b_i$ be
the sum of the weights in each block.
\begin{enumerate} \setlength{\itemsep}{-3pt}
\item if $\sum \sqrt{b_i} = 1$, then there is a unique completion;

\item if $\sum \sqrt{b_i} < 1,s = 2$ and one component is an isolated vertex,
there is a unique completion;
\item if $\sum \sqrt{b_i} < 1, s = 2$ and both components contain edges,
there are two completions;
\item if $\sum \sqrt{b_i} < 1, s > 2$, then there is an $(s-2)$-dimensional 
basic semialgebraic set of completions.
\end{enumerate}
\end{theorem}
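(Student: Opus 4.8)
The plan is to reduce the problem to the diagonal case of Theorem~\ref{halfnorm} and then to count solutions by inspecting the level sets of the function $f$ from that proof. By Corollary~\ref{cor:completability_block} the positive rank-one factorization inside each block of $\overline{M}$ is unique up to a scalar, and that scalar is pinned down by the block's super-row weight $\hat u_i$ and super-column weight $\hat v_i = b_i/\hat u_i$; an empty row or column (an isolated vertex of $G(S)$) instead carries a single nonnegative weight subject to no product constraint. Hence a completion of $M$ is the same datum as an assignment of the $\hat u_i$, the $\hat v_i$ and the isolated weights making the row weights and the column weights each sum to one. When every block carries an edge this is exactly the diagonal problem: the completions are the points of the open simplex $\{\mathbf u\in\Delta^{s-1}: u_i>0\}$ satisfying $f(\mathbf u)=\sum_{i=1}^s b_i/u_i=1$.

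I would then import the analytic facts established in the proof of Theorem~\ref{halfnorm}: the function $f$ is strictly convex on the open simplex, tends to $+\infty$ along every face on which some coordinate with $b_i>0$ vanishes, and attains a unique minimum equal to $(\sum_i\sqrt{b_i})^2$. Parts (1), (3) and (4) then drop out of convex geometry. If $\sum\sqrt{b_i}=1$ the minimum value is $1$, so $\{f=1\}$ is the single minimizer and the completion is unique for every $s$, giving (1). If $\sum\sqrt{b_i}<1$ the sublevel set $C=\{f\le 1\}$ is a compact convex body of full dimension $s-1$ sitting strictly inside the simplex, so its boundary $\{f=1\}$ --- which is precisely the completion set --- is homeomorphic to a sphere of dimension $s-2$; clearing denominators in $f=1$ presents it as a basic semialgebraic set cut out by one polynomial equation together with the inequalities $u_i>0$. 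This is (4), and the case $s=2$, where the sphere degenerates to two points, is (3). When isolated vertices are present the same count survives: substituting $\hat v_i=b_i/\hat u_i$ leaves $s$ weight variables constrained by the two sum-to-one equations, whose Jacobian has rank two on the region where $\sum\sqrt{b_i}<1$ forces the slack weights to be positive, again yielding dimension $s-2$.

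Part (2) is the one case where the symmetry breaks, and it is handled by a direct forcing argument rather than by $f$. With $s=2$ and one component an isolated vertex, say an empty row, the remaining block $K_1$ meets every column, so its super-column weight equals the total column mass $1$; this forces its super-row weight to be $b_1$ and the isolated row's weight to be $1-b_1$. Every weight is thereby determined, so the completion is unique, and it lies in $\Delta^{mn-1}$ exactly when $1-b_1>0$, i.e. $\sqrt{b_1}=\sum\sqrt{b_i}<1$. The contrast with (3) is instructive: here one super-weight is rigidly fixed by the global column sum, which removes the one-parameter freedom that in (3) lets the segment cross $\{f=1\}$ twice.

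The step I expect to be the real obstacle is the first one: turning the completability equivalence of Corollary~\ref{cor:completability_block} into an honest bijection between completions of $M$ and solutions of the reduced problem, and in particular booking the isolated vertices correctly. An empty row or column contributes a one-sided weight rather than a two-sided product constraint $b_i=\hat u_i\hat v_i$, so it must be carefully distinguished from a diagonal zero entry; getting this accounting right is exactly what separates (2) from (3). Once the reduction is clean, the only remaining delicate point is verifying that $C=\{f\le 1\}$ is a full-dimensional convex body strictly interior to the simplex, so that $\{f=1\}$ has the asserted dimension $s-2$; this rests entirely on the strict convexity and boundary blow-up of $f$ recorded in Theorem~\ref{halfnorm}.
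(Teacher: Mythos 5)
Your proof is correct, and its skeleton---contract each block to a super-row weight $\hat u_i$ and super-column weight $\hat v_i=b_i/\hat u_i$ via Corollaries~\ref{cor:completability_block} and~\ref{thm:rank_one_unique_completability}, then study the level set $\{f=1\}$ of $f(\mathbf u)=\sum_i b_i/u_i$ on the simplex---is the same reduction the paper uses. Where you genuinely diverge is in how that level set is analyzed. The paper argues parametrically: case (3) is settled by the explicit quadratic of Proposition~\ref{thm:easycompletion}, and case (4) by an informal radial argument (moving from the minimizer toward the simplex boundary, the sum ``will hit $1$''), which implicitly requires monotonicity of $f$ along rays from the minimizer. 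You instead note that $f$ is strictly convex, so $\{f\le 1\}$ is a compact convex body contained strictly in the open simplex, and its boundary $\{f=1\}$ is homeomorphic to $S^{s-2}$; cases (1), (3), (4) then follow uniformly (a single point when the minimum value is $1$, the two points of $S^0$ when $s=2$, an $(s-2)$-sphere in general). This is precisely the missing justification for the paper's radial argument, and it buys uniformity and rigor; what the paper's route buys is explicitness, since Proposition~\ref{thm:easycompletion} exhibits the two completions of case (3) in closed form. Your case (2)---the block meets every column, forcing $\hat v_1=1$, hence $\hat u_1=b_1$ and isolated weight $1-b_1$---is also an improvement in that it proves uniqueness, whereas the paper's proof only constructs a completion. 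Two small caveats: strict convexity and the boundary blow-up are \emph{not} ``established in the proof of Theorem~\ref{halfnorm}'' as you assert (that proof only locates the critical point); they need the one-line observation that the Hessian of $f$ is $\operatorname{diag}(2b_i/u_i^3)\succ 0$, so nothing is lost. And your final sentence on case (4) with isolated vertices (``whose Jacobian has rank two on the region where $\sum\sqrt{b_i}<1$ forces the slack weights to be positive'') is garbled as written; the intended regular-value argument---the two constraint gradients are independent on the feasible set, giving dimension $s-2$---is sound, but it should be stated that way.
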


\begin{proof}
\noindent \emph{\bf The unique completion for $\sum \sqrt{b_i} = 1$.}
The logic follows from the proof of Theorem~\ref{halfnorm}; the diagonal 
matrix is given by $(\sqrt{b_1} \cdots  \sqrt{b_n})^T(\sqrt{b_1} \cdots  \sqrt{b_n})$.
Since the diagonal version of the matrix has a unique solution there,
the completion of the original partial matrix is uniquely determined by it.

\noindent \emph{\bf The completion for $s = 2$ with an isolated vertex.}
Complete the non-empty block, and let the block sum be $b_1 < 1$.
Because one row or column is left, simply add a copy of the first row or
column and scale it until the sum is one.

\noindent \emph{\bf 
The two completions for $s = 2$ with two non-empty components.
}
Complete the two non-empty blocks. If $\sqrt{b_1} + \sqrt{b_2} < 1$, then
the factorization below
\[  \left(\begin{array}{cc} {(\sqrt{b_1} + \sqrt{b_2})}\sqrt{b_1} & {(\sqrt{b_1} + \sqrt{b_2})}\sqrt{b_2} \end{array} \right)^T
\left(\begin{array}{cc} \frac{\sqrt{b_1}}{\sqrt{b_1} + \sqrt{b_2}} &  \frac{\sqrt{b_2}}{\sqrt{b_1} + \sqrt{b_2}} \end{array} \right)\]
fails to keep the first factor in the simplex. Letting this vector vary
as the second factor moves to the edges of the simplex, the sum of the
coordinates is strictly increasing.  Using the formula in 
Proposition~\ref{thm:easycompletion}, we can find the two points
where the first factor lies in the simplex.

\noindent \emph{\bf The $(s-2)$-dimensional set of completions for $s > 2$.}
We start as in the $s=2$ case at the parametrization
\[  \left(\begin{array}{ccc} {(\sum \sqrt{b_i} )}\sqrt{b_1} & \cdots & {(\sum \sqrt{b_i})}\sqrt{b_s} \end{array} \right)^T
\left(\begin{array}{ccc} \frac{\sqrt{b_1}}{\sum \sqrt{b_i}} & \cdots & \frac{\sqrt{b_s}}{\sum \sqrt{b_i}} \end{array} \right)\]
Again the parametrization which matches our given values
fails to have a first factor in the simplex. As we move to the boundary
of the simplex along any direction, the sum in the first factor will hit $1$.
Therefore, the set of completions is parametrized by the sphere $S^{s-2}$.
\end{proof}

In the diagonal partial matrix case we will explicitly describe the semialgebraic set. One can use results in Section~\ref{section:completability} to derive semialgebraic descriptions in other cases. Let $M=\text{diag}(a_1, \ldots, a_n)$ and define $S=\sum _{i=1}^n\sqrt{a_i}$. Let us parametrize a vector ${\bf u} \in \Delta^{n-1}$ by
\[
{\bf u(t)}=\begin{pmatrix}
\frac{\sqrt{a_1}}{S}+t_1,\frac{\sqrt{a_2}}{S}+t_2, \ldots , \frac{\sqrt{a_{n-1}}}{S}+t_{n-1},\frac{\sqrt{a_n}}{S}+t_n
\end{pmatrix},
\]
where $t_n=-t_1-t_2-\ldots -t_{n-1}$.
Then
\[
f({\bf u(t)})=\sum_{i=1}^n v_i({\bf t})=\sum_{i=1}^n \frac{a_i}{u_i({\bf t})}  =\sum_{i=1}^{n}  \frac{a_i}{\frac{\sqrt{a_i}}{S}+t_i}= \sum_{i=1}^n \frac{a_i S}{\sqrt{a_i}+t_iS}.
\]

\begin{proposition}
The semialgebraic set of completions of a diagonal partial matrix $\text{diag}(a_1,a_2,\ldots ,a_n)$ is given by $f({\bf u(t)})=1$ and ${\bf u(t)}\geq 0$ (after clearing denominators). 
\end{proposition}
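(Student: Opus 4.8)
The plan is to unwind the definitions and show that the displayed description is nothing more than a faithful re-parametrization of the completion problem that was already solved in Theorem~\ref{theorem:completions}. Recall that a rank-one completion in $\Delta^{nn-1}$ is given by a factorization ${\bf u}^T{\bf v}$ with ${\bf u}\in\Delta^{n-1}$ and $\sum_i v_i = 1$; once the diagonal entries $a_i$ are fixed and all positive, the requirement $m_{ii}=a_i$ forces $v_i = a_i/u_i$ for each $i$, so a completion is completely determined by the choice of ${\bf u}\in\Delta^{n-1}$ (with all $u_i>0$), subject to the single scalar constraint $\sum_i v_i = \sum_i a_i/u_i = 1$. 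Thus the set of completions is in bijection with the set of ${\bf u}$ in the open simplex satisfying $f({\bf u})=1$, where $f$ is exactly the function introduced in the proof of Theorem~\ref{halfnorm}.

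The next step is to verify that the proposed parametrization ${\bf u(t)}$ sweeps out precisely the affine hyperplane $\sum_i u_i = 1$ as ${\bf t}$ ranges over the hyperplane $\sum_i t_i = 0$. This is immediate: the base point $(\sqrt{a_1}/S,\ldots,\sqrt{a_n}/S)$ has coordinate sum $\frac{1}{S}\sum_i\sqrt{a_i} = 1$ since $S=\sum_i\sqrt{a_i}$, and since $t_n=-t_1-\cdots-t_{n-1}$ we have $\sum_i t_i = 0$, so $\sum_i u_i(\mathbf{t}) = 1$ for every admissible ${\bf t}$. Moreover every point of the hyperplane $\{\sum u_i = 1\}$ is hit by a unique ${\bf t}$, namely $t_i = u_i - \sqrt{a_i}/S$. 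Hence the map ${\bf t}\mapsto{\bf u(t)}$ is an affine bijection onto the affine span of $\Delta^{n-1}$.

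It then remains to impose the two remaining requirements and match them to the two stated conditions. The constraint that ${\bf u}$ actually lie in the (closed) simplex, not merely its affine span, is exactly the inequality system ${\bf u(t)}\geq 0$; and the requirement that the resulting ${\bf v}$ sum to one is exactly $f({\bf u(t)})=1$, after substituting $u_i({\bf t})=\sqrt{a_i}/S + t_i$ and simplifying $a_i/u_i({\bf t})$ to $a_i S/(\sqrt{a_i}+t_i S)$ as displayed. Since $a_i>0$ forces $u_i>0$ wherever $v_i$ is finite, clearing denominators in $f({\bf u(t)})=1$ turns it into a genuine polynomial equation, and combined with the polynomial inequalities ${\bf u(t)}\geq 0$ this exhibits the completion set as a basic semialgebraic set, consistent with the dimension count ($s-2$, here with all blocks singletons so $s=n$, giving dimension $n-2$) already obtained in Theorem~\ref{theorem:completions}.

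The only point requiring a little care — and the one I would flag as the main obstacle — is the treatment of the denominators when clearing them: a priori $f$ is defined only where every $u_i({\bf t})>0$, so before clearing denominators one must restrict to the open region where $\sqrt{a_i}+t_iS>0$ for all $i$. I would therefore argue that on the closed region cut out by ${\bf u(t)}\geq 0$ the equation $f=1$ only has solutions in the open interior (since as any $u_i\to 0^+$ the corresponding term $a_i/u_i\to+\infty$ and $f\to+\infty>1$, the same boundary-blowup argument used in the proof of Theorem~\ref{halfnorm}), so no spurious roots are introduced on the boundary and the cleared polynomial equation together with ${\bf u(t)}\geq 0$ describes exactly the completion set.
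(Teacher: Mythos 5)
Your first three paragraphs are essentially the argument the paper intends: the paper states this proposition without any proof, as an immediate consequence of the parametrization derived just before it, and your unwinding — completions correspond bijectively to ${\bf u} \in \Delta^{n-1}$ with $f({\bf u})=1$ via $v_i = a_i/u_i$ (using positivity of the $a_i$ and the row-sum/column-sum factorization), together with the observation that ${\bf t} \mapsto {\bf u(t)}$ is an affine bijection onto the hyperplane $\sum_i u_i = 1$ — is a correct and complete account of that part.

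The problem is your last paragraph, where you claim that clearing denominators introduces no spurious solutions inside the region ${\bf u(t)} \geq 0$. That claim is false for $n \geq 3$. Write $w_i = \sqrt{a_i} + t_i S$, so $w_i = S\, u_i({\bf t})$; the cleared equation is
\[
\sum_{i=1}^n a_i S \prod_{j \neq i} w_j \;=\; \prod_{j=1}^n w_j .
\]
If two or more coordinates of ${\bf u(t)}$ vanish, then every term on the left contains a vanishing factor and so does the right-hand side, so the equation holds trivially; hence every point of every codimension-$\geq 2$ face of the simplex satisfies the cleared polynomial equation together with ${\bf u(t)} \geq 0$. Concretely, for $n=3$ take $t_1 = -\sqrt{a_1}/S$ and $t_2 = -\sqrt{a_2}/S$, i.e. ${\bf u} = (0,0,1)$: this satisfies the polynomial system but is not a completion, since $a_1 > 0$ forces $u_1 > 0$ in any genuine completion. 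Your blow-up argument ($f \to +\infty$ as a single $u_i \to 0^+$) only rules out spurious zeros on the relative interiors of the facets, where exactly one $w_i = 0$ and the cleared polynomial equals $a_i S \prod_{j \neq i} w_j > 0$; it says nothing about the smaller faces, on which the cleared polynomial vanishes identically. The statement is easily repaired — replace ${\bf u(t)} \geq 0$ by ${\bf u(t)} > 0$ (legitimate, since the completion set is bounded away from the boundary precisely by the blow-up of $f$), or explicitly discard these lower-dimensional spurious components — but as written your final step, which you yourself flagged as the main obstacle, fails.
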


Any path from the local minimum to the boundary of the simplex will strike at least one solution. If any completion is acceptable, we can designate a simple path and find its points of intersection with the semialgebraic set of completions.

\begin{proposition} Let $M = \operatorname{diag}(a_1,\ldots,a_n)$, such that $n > 2$ and $S  = \sum \sqrt{a_i} < 1$. Then, a completion of $M$ is given by:
\[ {\bf u} = \left(\frac{\sqrt{a_1}}{S} + t, \frac{\sqrt{a_2}}{S} - t, \frac{\sqrt{a_3}}{S},\ldots, \frac{\sqrt{a_n}}{S}\right),\]
where $t$ is one of the solutions to the following quadratic equation:
\begin{equation}
S^2\left(S(\sqrt{a_1} + \sqrt{a_2})-S^2+1\right) t^2 
 - \: S(S^2-1)\left(\sqrt{a_1} - \sqrt{a_2}\right) t
+\:  (S^2-1) \sqrt{a_1 a_2} = 0, \: \: \:  \label{eq:walkparameter}
\end{equation}
both of which lie in the interval $[- \sqrt{a_1}/S, \sqrt{a_2}/S]$.
\label{thm:easycompletion}
\end{proposition}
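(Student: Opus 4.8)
The plan is to restrict the objective function $f({\bf u}) = \sum_i a_i/u_i$ from the proof of Theorem~\ref{halfnorm} to the one-parameter family ${\bf u}(t)$, reduce the equation $f=1$ to the quadratic~(\ref{eq:walkparameter}) by clearing denominators, and then use convexity to locate its two roots inside the stated interval. The key observation is that along ${\bf u}(t)$ only the first two coordinates move, so $u_3,\ldots,u_n$ stay fixed at $\sqrt{a_i}/S$ and contribute a constant $\sum_{i=3}^n a_i/(\sqrt{a_i}/S) = S\sum_{i=3}^n \sqrt{a_i} = S(S-\sqrt{a_1}-\sqrt{a_2})$ to $f$.

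Writing the two moving terms as $\tfrac{a_1 S}{\sqrt{a_1}+tS} + \tfrac{a_2 S}{\sqrt{a_2}-tS}$, the condition $f({\bf u}(t))=1$ becomes
\[
\frac{a_1 S}{\sqrt{a_1} + tS} + \frac{a_2 S}{\sqrt{a_2} - tS} = 1 - S^2 + S(\sqrt{a_1} + \sqrt{a_2}).
\]
Clearing the denominator $(\sqrt{a_1}+tS)(\sqrt{a_2}-tS)$ and collecting powers of $t$ produces a quadratic whose three coefficients simplify, using $a_2 - a_1 = (\sqrt{a_2}-\sqrt{a_1})(\sqrt{a_1}+\sqrt{a_2})$ together with the identity $S(\sqrt{a_1}+\sqrt{a_2}) - \bigl[1 - S^2 + S(\sqrt{a_1}+\sqrt{a_2})\bigr] = S^2-1$, to exactly the coefficients appearing in~(\ref{eq:walkparameter}). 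This bookkeeping — matching the cleared-denominator quadratic term by term to~(\ref{eq:walkparameter}) — is the step demanding the most care, though it is a routine algebraic verification once the right substitutions are made, and I expect it to be the main obstacle.

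It then remains to show that both roots of~(\ref{eq:walkparameter}) are real and lie in $[-\sqrt{a_1}/S,\,\sqrt{a_2}/S]$. Rather than estimating the discriminant directly, I would argue from the analytic behavior of $g(t) := f({\bf u}(t))$ on the open interval $(-\sqrt{a_1}/S,\,\sqrt{a_2}/S)$, where ${\bf u}(t)\geq 0$. Each moving summand of $g$ is convex there, so $g$ is convex; a direct computation gives $g'(0) = -S^2 + S^2 = 0$, so $t=0$ is the global minimum, with $g(0) = S^2 < 1$ by hypothesis. As $t$ approaches either endpoint, one denominator tends to zero from above and $g(t)\to +\infty$. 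By convexity and the intermediate value theorem, the level set $g(t)=1$ consists of exactly two points, one in $(-\sqrt{a_1}/S,0)$ and one in $(0,\sqrt{a_2}/S)$; since these are precisely the solutions of~(\ref{eq:walkparameter}), both roots lie in the stated closed interval.

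Finally, for either such $t$ the vector ${\bf u}(t)$ has nonnegative coordinates summing to one, hence lies in $\Delta^{n-1}$, and setting $v_i = a_i/u_i(t)$ gives $\sum_i v_i = g(t) = 1$ with every $v_i > 0$. Then ${\bf u}(t)^T{\bf v}$ is a nonnegative rank-one matrix whose entries sum to $(\sum_i u_i)(\sum_j v_j) = 1$ and whose $i$-th diagonal entry is $u_i\cdot a_i/u_i = a_i$, so it is a rank-one completion of $M$ in $\Delta^{nn-1}$, as claimed. With the coefficient-matching computation in hand, the localization of the roots follows immediately from the convexity argument.
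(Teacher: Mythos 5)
Your proposal is correct and follows essentially the same route as the paper's own (much terser) proof: restrict $f$ to the line segment ${\bf u}(t)$, clear denominators to obtain the quadratic~(\ref{eq:walkparameter}), and use the fact that the segment passes through the minimum $g(0)=S^2<1$ together with the blow-up of $g$ at the endpoints to place both roots in the interval. Your additions—the explicit coefficient matching, the convexity argument in place of the paper's bare appeal to continuity, and the final verification that ${\bf u}(t)^T{\bf v}$ is a valid completion—merely fill in details the paper leaves implicit.
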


\begin{proof}
The trajectory traced for values of $t \in [- \sqrt{a_1}/S, \sqrt{a_2}/S]$, is a line segment on the simplex. Setting the sum of the coordinates of $( a_i / u_i)_{i = 1,\ldots,n}$ equal to one and clearing denominators gives the quadratic equation above. Since it passes through the local minimum, continuity implies existence of two solutions in the desired interval.
\end{proof}

\begin{example}
Consider the matrix $M = \operatorname{diag}(1/4,1/25,1/36)$. To obtain a completion, one may solve the quadratic equation~(\ref{eq:walkparameter}), which after scaling turns into
\[ 9295t^2+936t-360 = 0, \]
giving solutions $t = -\frac{6}{715} \, \sqrt{586} - \frac{36}{715}$, or 
$\frac{6}{715} \, \sqrt{586} - \frac{36}{715}$. Using the latter value, we obtain the matrix completion:
\[ {\bf u}^T {\bf v} = \left(\begin{array}{rrr}
0.250 & 0.374 & 0.105 \\
0.027 & 0.040 & 0.011 \\
0.066 & 0.098 & 0.028
\end{array}\right).\]
\end{example}

\subsection{Optimization on a Completion Set}
\label{subsection:optimization}

When the graph has $s \geq 2$ connected components which contain at least an edge and $\sum_{i = 1}^s \sqrt{b_i} < 1$, there is an $(s-2)$-dimensional set of completions. For example, consider a diagonal partial $3 \times 3$ matrix with each observed entry equal to the same constant $c$. In Figure \ref{fig:levels}, each curve represents values of ${\bf u}$ that parametrize a completion of the partial matrix with $c$ on the diagonal, for various values of $c$. Here ${\bf u}$ is projected onto the first two coordinates. 

\begin{figure}[h]
\centerline{
\includegraphics[scale=.4]{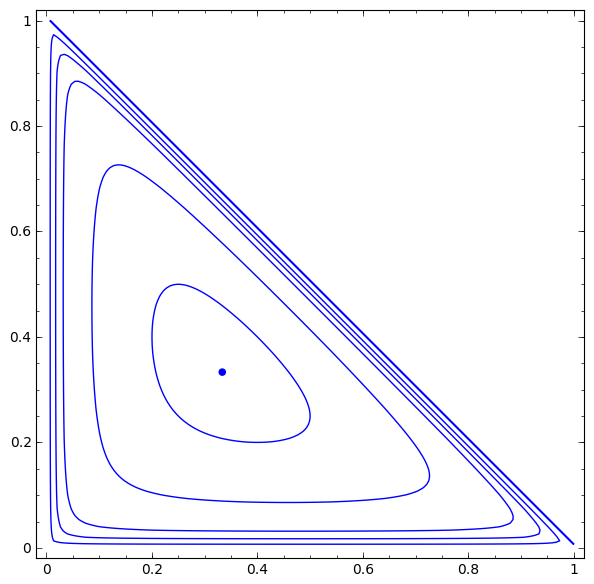}}
\caption{\small Solution curves for $c = 1/9, 1/10, 1/16, 1/36,1/64$, and $1/150$, in the projection of the simplex onto the first two coordinates.}
\label{fig:levels}
\end{figure}

If we have infinitely many completions, 
one way to find a desired completion is to minimize or maximize a distance 
measure $d$ from a fixed matrix in the standard 
simplex. We will explain how to use Lagrange multipliers to solve this optimization problem if $d$ is the Euclidean distance from the uniform distribution.

By the method of Lagrange multipliers, an element $\begin{pmatrix} t_1, t_2, \cdots  ,t_{n-1}\end{pmatrix}$ in this semialgebraic set is a critical point for a distance function $d$ if and only if the gradient of $d$ is a constant multiple of the vector of partial derivatives $\frac{\partial f}{\partial t_i}$. To compute all the critical points of the function $d$ on the variety given by $f({\bf t})=1$, we need to solve the system of rational equations given by $f({\bf t})=1$ and all the $2 \times 2$ minors of the matrix 
\[
L=
\begin{pmatrix}
\frac{\partial f}{\partial t_1} & \frac{\partial f}{\partial t_2} & \ldots & \frac{\partial f}{\partial t_{n-1}}\\
\frac{\partial d}{\partial t_1} & \frac{\partial d}{\partial t_2} & \ldots & \frac{\partial d}{\partial t_{n-1}}\\
\end{pmatrix}.
\]
Finally we need to check for all real solutions which satisfy ${\bf u(t)} \geq 0$  which one minimizes the distance $d$. 

\begin{example}
Consider the matrix $M = \operatorname{diag}(1/4,1/25,1/36)$. Find a completion 
that minimizes the Euclidean distance from the uniform distribution:
\[
d=\sqrt{ \sum_{i,j}\left(m_{ij}-\frac{1}{n^2}\right)^2}=\sqrt{ \sum_{i,j}\left(u_iv_j-\frac{1}{n^2}\right)^2}.
\]
We use the Euclidean distance, but this method can be used for any distance measure. 

We construct the Lagrange matrix
\[
L=\begin{pmatrix}
\frac{\partial f}{\partial t_1} & \frac{\partial f}{\partial t_2}\\
\frac{\partial d}{\partial t_1} & \frac{\partial d}{\partial t_2}\\
\end{pmatrix} 
\]
and find the critical points of $d$ on the variety $f=1$ by solving the system
of rational equations $\{f=1,det(L)=0\}$. We use \texttt{maple} to construct $L$ and to solve the system of equations.
This system has $18$ solutions, out of which ten are real and four are feasible, i.e. they satisfy ${\bf u} \geq 0$.
The minimum is achieved at
\[
M=\begin{pmatrix}
0.250 & 0.049 & 0.215\\
0.204 & 0.040 & 0.176\\
0.032 & 0.006 & 0.028
\end{pmatrix}
\text{ and }
M^T.
\]
The Euclidean distance from the uniform distribution is $0.276$.
\end{example}

\subsection{Completions for Partial Matrices with Zeros} \label{subsection:boundary2}

Now we consider those partial matrices that have
zeros among their entries.

\begin{definition} Let $M$ be a partial matrix subordinate to
a pattern $S$.
Define $X(I,J; M)$ to be the set of completions of $M$
such that the rows in $I$ and the columns in $J$ are set
to zero. 
\end{definition}

This is only well-defined where $S \cap (I \cup J)$ are 
precisely the zero-labeled elements of $M$.
\begin{proposition}
Let $S$ be a pattern, and let $Z\subset S$ be the subset of zero entries.
Let $C$ be the set of vertex covers of $G(Z)$ so that no vertex
is adjacent to an edge of $G(S \setminus Z)$.
Then the set of completions for $M$ subordinate to $S$ is given
by:
\[ \bigcup_{c \in C} X(c; M).\]
\end{proposition}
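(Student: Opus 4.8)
The plan is to prove the two inclusions separately, with the zero row or column property (Lemma~\ref{lemma:zeros}) doing essentially all of the real work. First I would observe that the reverse inclusion $\bigcup_{c \in C} X(c;M) \subseteq \{\text{completions of } M\}$ is built into the definitions, since by construction each $X(c;M)$ is a set of completions of $M$; the only thing to verify is that $X(c;M)$ is well-defined for every $c = (I,J) \in C$, i.e.\ that $S \cap (I \cup J) = Z$. This is exactly where the two conditions cutting out $C$ enter. Because $c$ is a vertex cover of $G(Z)$, every zero entry has its row in $I$ or its column in $J$, giving $Z \subseteq S \cap (I \cup J)$; and because no vertex of $c$ is adjacent to an edge of $G(S \setminus Z)$, no positively specified entry lies in a row of $I$ or a column of $J$, giving $S \cap (I \cup J) \subseteq Z$. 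Together these yield the well-definedness requirement, so the reverse inclusion holds and, moreover, $C$ is identified as precisely the set of index pairs for which $X(I,J;M)$ makes sense.

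For the forward inclusion I would take an arbitrary completion $P = {\bf u}^T {\bf v} \in \Delta^{mn-1}$ of $M$ and set $I_P = \{i : u_i = 0\}$ and $J_P = \{j : v_j = 0\}$, the rows and columns vanishing in $P$. Then $P \in X((I_P,J_P);M)$ immediately, so it suffices to show $c_P := (I_P,J_P) \in C$. That $c_P$ covers $G(Z)$ follows edge by edge from the zero row or column property: for each $(i,j) \in Z$ we have $u_i v_j = p_{ij} = m_{ij} = 0$, forcing $u_i = 0$ or $v_j = 0$, i.e.\ $i \in I_P$ or $j \in J_P$. That $c_P$ satisfies the no-adjacency condition is seen by contradiction: if some $i \in I_P$ were adjacent to an edge of $G(S \setminus Z)$, say $(i,k) \in S$ with $m_{ik} > 0$, then $p_{ik} = u_i v_k = 0 \neq m_{ik}$ would contradict that $P$ completes $M$, and symmetrically for columns in $J_P$. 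Hence $c_P \in C$ and $P$ lies in the union.

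I would interpret $G(Z)$ as carrying the full vertex set of rows and columns, following the paper's matrix--graph dictionary, so that a vertex cover is permitted to contain isolated vertices; this matters because a completion may legitimately zero out a row or column carrying no specified entries, and such a row or column appears in $c_P$ without being incident to any edge of $G(Z)$. Combining the two inclusions gives the asserted equality. The only genuinely delicate point here is bookkeeping rather than mathematics: one must check that the two conditions defining $C$ coincide exactly with the well-definedness requirement $S \cap (I \cup J) = Z$, and that the cover $c_P$ extracted from an arbitrary completion need be neither minimal nor free of isolated vertices. Including all such covers in $C$---not merely the minimal ones---is precisely what makes the union exhaust every completion.
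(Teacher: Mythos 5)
Your proof is correct and takes essentially the same approach as the paper's (which is only a brief sketch): the vertex-cover condition captures the fact that every zero entry of a rank-one completion must lie in a zero row or zero column, and the no-adjacency condition captures the fact that no positively specified entry can lie in such a row or column. Your write-up is in fact more careful than the paper's, spelling out both inclusions, the well-definedness condition $S \cap (I \cup J) = Z$, and the need to allow non-minimal covers containing isolated vertices.
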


\begin{proof}
All of the zero entries must be in a zero row or a zero
column - for this reason, we need a vertex cover. No nonzero
entry may be in a zero row or zero column. This is why no vertex
may be adjacent to an edge of $S \setminus Z$. The result follows
from there.
\end{proof}

\begin{figure}[h!]
\centering
\includegraphics[width=.5\textwidth]{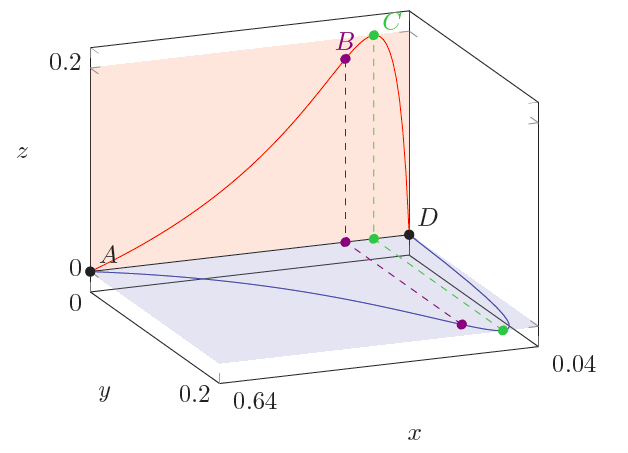}
\caption{Three coordinates of the completion set of $M$.}
\label{fig:comps}
\end{figure}

\begin{example}[Two components in solution set] 
Figure \ref{fig:comps} describes the set of completions for the matrix $M$,
by plotting all possible values of $x,y,z$; the indicated matrix
coordinates give an isomorphism between the two pictured curves
and all completions of $M$. Each curve comes from a minimal vertex
cover of the set of zero entries: the red curve takes column 3, and the
blue curve takes row 3. The components corresponding
to two minimal vertex covers intersect at points corresponding
to their union.
\[ M = \left[ \begin{array}{ccc}
.16 		& & \\
& .16	&  \\
&	& 0 
\end{array} \right]
\to \left[ \begin{array}{ccc}
.16 		& {\bf x}	& 6.25 xy \\
.0256 x^{-1}& .16	& {\bf y} \\
{\bf z}	& 6.25 xz	& 0 
\end{array} \right]\]

From left to right, we have the following matrices moving along the red curve:
\[ 
\begin{matrix}
A & B & C & D\\
\left[ \begin{array}{ccc}
.16 		& .64 & 0\\
.04 & .16	& 0 \\
0 &0	& 0 
\end{array} \right] &
\left[ \begin{array}{ccc}
.16 		& .16 & 0\\
.16 & .16	& 0 \\
.18 & .18	& 0 
\end{array} \right] &
\left[ \begin{array}{ccc}
.16 		& 8/75 & 0\\
6/25 & .16	& 0 \\
.2 & 2/15	& 0 
\end{array} \right] &
\left[ \begin{array}{ccc}
.16 		& .04 & 0\\
.64 & .16	& 0 \\
0 &0	& 0 
\end{array} \right] \\
\end{matrix}
\]

The matrix at point $B$ represents the point at which the
bottom row has largest possible row sum. The matrix at
point $C$ has the largest possible value for $z$.
Following along the blue curve, we would pass through
$B^T$ and $C^T$.
\end{example}

\section{Generalizations : Rank-1 Tensors and Rank-2 Matrices}
\label{section:generalizations}

In this section, we will consider the more general setting of tensors. 
The reader may consult~\cite{Landsberg} for an 
introduction on tensors and tensor rank. 
The notions of partial tensor, completion and pattern are analogous 
to the matrix case. Readers who are  interested only in matrices 
can read other sections independently of this section.

\subsection{Diagonal Partial Tensors}\label{subsection:tensors}

Theorem~\ref{halfnorm} for diagonal partial matrices generalizes nicely to diagonal partial tensors:

\begin{theorem} \label{thm:tensors}
Suppose we are given an order-$d$ $n \times n \times \cdots \times n$ partial tensor $T$ with nonnegative observed entries $a_i$ along the diagonal, i. e. we have $t_{ii\ldots i}=a_i$ for $1 \leq i \leq n$, and all other entries unobserved. Then $T$ is completable if and only if
\[ \sum_{i = 1}^n a_i^{1/d} \leq 1.\]
\end{theorem}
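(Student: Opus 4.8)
The plan is to mirror the proof of Theorem~\ref{halfnorm}, replacing the rank-one factorization $\mathbf{u}^T\mathbf{v}$ of a matrix by the factorization of a rank-one tensor in the simplex as $\mathbf{u}^{(1)}\otimes\cdots\otimes\mathbf{u}^{(d)}$ with each $\mathbf{u}^{(k)}\in\Delta^{n-1}$. Under this factorization the $(i_1,\dots,i_d)$-entry is $\prod_{k=1}^d u^{(k)}_{i_k}$, so the diagonal constraint becomes $\prod_{k=1}^d u^{(k)}_i = a_i$ for each $i$, and a completion exists precisely when such probability vectors can be found. Necessity I would then get directly from the AM--GM inequality, which is the order-$d$ analogue of the Cauchy--Schwarz step used in the matrix boundary case of Section~\ref{subsection:boundary}: given a completion, $a_i^{1/d}=\prod_{k=1}^d(u^{(k)}_i)^{1/d}\le \tfrac1d\sum_{k=1}^d u^{(k)}_i$, and summing over $i$ with $\sum_i u^{(k)}_i=1$ yields $\sum_i a_i^{1/d}\le \tfrac1d\sum_{k=1}^d 1 = 1$. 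This argument uses only nonnegativity, so it covers zeros uniformly.

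For sufficiency I would first assume all $a_i>0$. As in Theorem~\ref{halfnorm}, leave $\mathbf{u}^{(1)},\dots,\mathbf{u}^{(d-1)}$ free in their simplices and solve the diagonal constraint for the last vector, $u^{(d)}_i = a_i/\prod_{k=1}^{d-1}u^{(k)}_i$, so that only the simplex condition $\sum_i u^{(d)}_i=1$ remains. Writing $f=\sum_i a_i/\prod_{k=1}^{d-1}u^{(k)}_i$, the claim is that $\min f = \bigl(\sum_i a_i^{1/d}\bigr)^d$ over $(\Delta^{n-1})^{d-1}$. The lower bound is one application of H\"older's inequality with $d$ equal exponents:
\[ \sum_i a_i^{1/d} = \sum_i (u^{(d)}_i)^{1/d}\prod_{k=1}^{d-1}(u^{(k)}_i)^{1/d} \le \Bigl(\sum_i u^{(d)}_i\Bigr)^{1/d}\prod_{k=1}^{d-1}\Bigl(\sum_i u^{(k)}_i\Bigr)^{1/d} = f^{1/d}, \]
and equality, hence the minimizer, occurs when all the vectors are proportional, i.e. $u^{(k)}_i = a_i^{1/d}/\sum_j a_j^{1/d}$ for every $k$; a Lagrange-multiplier computation confirms this is the unique interior critical point, and it is the global minimum since $f$ is convex on the open product of simplices. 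Because $\sum_i a_i^{1/d}\le 1$ forces $\min f\le 1$, while $f\to\infty$ as any $u^{(k)}_i\to 0$, continuity (the intermediate value theorem along a path from the minimizer toward the boundary) produces a point where $f=1$; there $\mathbf{u}^{(d)}$ lies in $\Delta^{n-1}$ and the $d$ vectors give the desired completion. When $\sum_i a_i^{1/d}=1$ the interior minimizer itself already satisfies $f=1$.

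To treat zeros I would reduce to the positive case exactly as in the nonnegative diagonal-matrix proposition of Section~\ref{subsection:boundary}: restrict to the support $\{i:a_i>0\}$, complete there by the above, and pad each probability vector with zeros in the off-support coordinates, arranging the padding so that every off-support diagonal product vanishes (with the degenerate small-support cases handled by explicit constructions, as in that proposition). The main obstacle is the sufficiency argument for positive entries, namely pinning down that the minimum of $f$ is exactly $\bigl(\sum_i a_i^{1/d}\bigr)^d$ and then running the continuity argument; the H\"older step makes the minimum transparent, so the remaining delicate point is the bookkeeping that places the zeros in distinct coordinates across the $d$ factors. Necessity, by contrast, is immediate from AM--GM.
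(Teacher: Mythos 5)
Your proof is correct, and although its skeleton for sufficiency matches the paper's --- factor the tensor as $\mathbf{u}^{(1)}\otimes\cdots\otimes\mathbf{u}^{(d)}$ with factors in $\Delta^{n-1}$, solve the diagonal constraints for the last factor, minimize $f=\sum_i a_i/\prod_{k=1}^{d-1} u_i^{(k)}$ over $(\Delta^{n-1})^{d-1}$, and run the intermediate value argument using the blow-up of $f$ at the boundary --- the two key steps are done by genuinely different means. First, the paper pins down $\min f = \bigl(\sum_i a_i^{1/d}\bigr)^d$ by a multi-step calculus computation: it sets all partial derivatives (after eliminating $u_n^{(j)}$) to zero, deduces $u_i^{(j)}=\kappa\,a_i^{1/d}$, and evaluates $f$ there; your H\"older inequality with $d$ equal exponents gives the lower bound $f\ge\bigl(\sum_i a_i^{1/d}\bigr)^d$ in one line, and attainment at the proportional point finishes it with no calculus (the convexity and Lagrange-multiplier remarks are then superfluous, though correct). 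Second, your necessity argument is independent of the minimization entirely: AM--GM applied to an arbitrary completion, $a_i^{1/d}\le\tfrac1d\sum_k u_i^{(k)}$, summed over $i$, gives $\sum_i a_i^{1/d}\le 1$ directly, and it is valid verbatim when some $a_i=0$. This is a real gain: the theorem is stated for \emph{nonnegative} diagonal entries, but the paper's proof tacitly assumes positivity (every $a_i$ sits in a denominator), whereas your restriction-to-support reduction, with explicit constructions for supports of size at most one as in the nonnegative-diagonal proposition of Section~\ref{subsection:boundary}, covers the boundary cases. One small correction to your closing worry: no bookkeeping placing ``zeros in distinct coordinates across the $d$ factors'' is needed --- pad every factor with zeros in the \emph{same} off-support coordinates; then each off-support diagonal entry is a product of zeros, and all other affected entries are unobserved, so the padded outer product is a valid completion.
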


\begin{proof}
The proof is analogous to the proof of Theorem \ref{halfnorm}, with a few adjustments to deal with the multiple parametrizing vectors. The tensor $T$ can be factored as  ${\bf u}^1 \otimes \ldots \otimes {\bf u}^{d}$, where each ${\bf u}^i \in \Delta^{n-1} \subset \mathbb{R}^n$. The relations $a_i = u_i^1 \cdots u_i^d$ imply that the coordinates of ${\bf u}^d$ can be expressed as functions on the product of simplices $(\Delta^{n-1})^{d-1}$. Define $f:(\Delta^{n-1})^{d-1} \to \mathbb{R}$ by:
\[f({\bf u}^1,\ldots,{\bf u}^{d-1}) = \sum_{i=1}^n  u^d_i = \sum_{i = 1}^n \dfrac{a_i}{ u_i^1 \cdots u_i^{d-1}}.\]
Since every variable appears in the denominator of some term, 
the function $f$ approaches infinity at the boundary of the 
product of simplices. A candidate vector ${\bf u}^d$ will be 
available if and only if the minimum value of $f$ 
on the product of simplices is less than or equal to one.  
To find the minimum, compute partial derivatives; 
as in the proof of Theorem \ref{halfnorm}, we let $ u_n^j = 1 - \sum_{k = 1}^{n-1}  u_k^j$ 
for each $j = 1,\ldots, d-1$.
\[ \dfrac{\partial f}{\partial  u_i^j} = - \dfrac{a_i}{ u_i^j\prod_{k = 1}^{d-1}  u_i^k}  + \dfrac{a_n}{ u_n^j\prod_{k = 1}^{d-1}  u_n^k}.\]
Setting the partial derivatives to zero gives us the following:
\[  \dfrac{a_i}{ u_i^j\prod_{k = 1}^{d-1} u_i^k}  =  \dfrac{a_n}{ u_n^j\prod_{k = 1}^{d-1} u_n^k}  \hspace{1cm} \forall i \in [n-1], \forall j \in [d-1] .\]
Let $c_j$ be the value on both sides of this equation. Picking two values of $j$, w.l.o.g., $1$ and $2$, this designation means:
\[ \dfrac{a_i}{ u_i^1\prod_{k = 1}^{d-1}  u_i^k} /  \dfrac{a_i}{ u_i^2\prod_{k = 1}^{d-1} u_i^k} =  \dfrac{ u_i^2}{ u_i^1} = \dfrac{c_1}{c_2}.\]
Applying to all indices, we have $u_i^j = \frac{c_1}{c_j} u_i^1$ for all $i,j$. 
Since $\sum_i  u_i^j = 1 = \frac{c_1}{c_j} \sum_i  u_i^1 = c_1/c_j$ for all $j$, 
implying that every $c_j = c_1$, and
\[ \begin{matrix} \dfrac{a_i}{( u_i^j)^d}  = c_1 \hspace{5mm} \forall i \in [n], \forall j \in [d-1] & \Rightarrow   & u_i^j = (a_i/c_1)^{1/d} = \kappa(a_i)^{1/d}  . \end{matrix}\]
for some constant $\kappa$. Since the sum  $\sum_i  u_i^j = 1$, the value of $\kappa = (\sum_{i = 1}^n a_i^{1/d})^{-1}$.
Plugging in these values of $u_i^j$, we obtain 
\[ f = \sum_{i = 1}^n \dfrac{a_i}{(\kappa(a_i)^{1/d})^{d-1}} = \sum_{i = 1}^n \dfrac{ a_i^{1/d}}{\kappa^{d-1}} = \left(\sum_{i = 1}^n a_i^{1/d}\right)^d.\]
Since $f$ is at its minimum here, the value must be less than or equal to one for a solution to exist, proving the theorem.
\end{proof}

\subsection{General Partial Tensors}

Generalization to higher-order tensors brings several challenges. Theorem \ref{thm:tensors} gives a partial result characterizing diagonal tensors. However, the nice bipartite graph structure we had for matrices becomes $k$-partite hypergraphs with $k$-hyperedges; notions like connectivity will need to be modified. So, while any rank-one matrix completability problem was reducible to a diagonal case, the tensor case does not seem to be reducible in the same way. We record here the results for the smallest case distinct from matrices:

\begin{example}[$2 \times 2 \times 2$ Tensors]
The variety of $2 \times 2 \times 2$ rank-one tensors whose 
entries sum to one is $3$-dimensional. For this example,
we look only at algebraically independent sets of entries,
so as to extract the semialgebraic constraints.
We use the octahedral symmetry group of the cube 
to restrict to combinatorially distinct examples.
\begin{enumerate}
\item (Size $1$) Any singleton, e.g. $t_{000}$. The only condition is $t_{000} \leq 1$.
\item (Size $2$) Three orbits of pairs:
	\begin{enumerate}
	\item $t_{000},t_{001}$: $t_{000} + t_{001} \leq 1$.
	\item $t_{000},t_{011}$: $\sqrt{t_{000}} + \sqrt{t_{011}} \leq 1$.
	\item $t_{000},t_{111}$: $\sqrt[3]{t_{000}} + \sqrt[3]{t_{111}} \leq 1$.
	\end{enumerate}
\item (Size $3$) Three orbits of triples:
	\begin{enumerate}
	\item $t_{000},t_{001},t_{010}$: $t_{000} + t_{001} + t_{010} + (t_{001}t_{010}/t_{000}) \leq 1$.
	\item $t_{000},t_{001},t_{110}$: $\sqrt{t_{000}+t_{001}} + \sqrt{t_{110}+t_{001}t_{110}/t_{000}} \leq 1$.
	\item $t_{000},t_{101},t_{011}$: The tensor is completable if and only if the equation 
	\vspace{-6pt}
	\[ x^3 + (t_{000} + t_{101} + t_{011} - 1)x^2 + (t_{000} t_{101} + t_{000} t_{011} + t_{101} t_{011}) x + t_{000}t_{101}t_{011} = 0\] 
	
	\vspace{-8pt} has a root in the interval $[0,1]$.
	\end{enumerate}
\end{enumerate}
In this example, five of the seven cases are equivalent to
partial matrix problems. Only 2(c) and 3(c) use techniques that
do not fall immediately out of the matrix case; still, the
approach is analogous.
\end{example}

\subsection{Low-Rank Matrices}\label{subsection:conditional_independence}

One natural direction to generalize these results would be to fix rank $r> 1$, and find conditions for a matrix to be rank or nonnegative rank $r$ and have nonnegative entries that sum to one. One obvious consequence of our results is that any matrix completable to rank one is trivially completable as a higher-rank matrix. It is harder to provide tighter conditions, however, even in the smallest examples.
\begin{example}[$r = 2, m = n = 3$]\label{example:conditional_independence}
There are two polynomials constraining the entries of a $3 \times 3$ rank $2$ matrix in the standard simplex: the determinant must be zero, and the sum of the entries must be one. The variety of matrices with these properties has dimension $7$.

We take two combinatorially distinct partial matrices with seven entries. 
To find completions, we substitute $X$ and $R-X$ for the missing entries, 
where $R = 1 - (a + b + c + d + e + f +  g)$:
\[ \text{ A:   }
\left(
\begin{array}{ccc}
a & b & c \\
d & e & f \\
g & X  & R - X \\
\end{array}
\right)
\hspace{2cm}
\text{ B:   }
\left(
\begin{array}{ccc}
a & b & c \\
d & X & f \\
g & e & R - X\\
\end{array}
\right).
\]
Since the sum is now fixed at one, we only need to check that there is a value of $X$ in $[0,R]$ so that the determinant is zero. In the first case the determinant gives a linear equation in $X$, while in the second case the determinant is a quadratic; the solutions to each are:
\begin{align}
X &= \frac{g(bf - ce)+ R(ae - bd)}{(ae - bd) + (af - cd)} \nonumber\\
X &= \frac{(aR + bd - cg) \pm \sqrt{ (aR + bd - cg)^2 - 4a(b(dR - fg) + e(af - cd))}}{2a} \nonumber
\end{align}
Substituting the values $(.07, .09,.09,.12,.15,.04,.16)$ 
in each matrix yields a completion 
for A, but since the discriminant of B is negative, 
no completion is possible.
\end{example}

From the statistics viewpoint, it would be more interesting to study completability to matrices in the standard simplex of nonnegative rank at most $r$, because the $r$-th mixture model of two discrete random variables is the semialgebraic set of matrices of nonnegative rank at most $r$. If a nonnegative matrix has rank $0,1,$ or $2$, then its nonnegative rank is equal to its rank. Hence, in Example~\ref{example:conditional_independence}, we simultaneously address the question of completing a partial matrix to a matrix in the standard simplex of nonnegative rank $2$. 

If $r\geq 3$, then matrices of nonnegative rank at most $r$ form a complicated semialgebraic set. For $r=3$, a semialgebraic description of this set is given in~\cite[Theorem~3.1]{KubjasRobevaSturmfels}. Partial matrices that are completable to matrices in the standard simplex of nonnegative rank at most $3$ are coordinate projections of this semialgebraic set.

\section{Semialgebraic Description}\label{subsection:semialgebraic_set}

A reader interested only in matrices can replace everywhere in this section ``tensor" by ``matrix".

\begin{proposition}\label{prop:completable_matrices_form_a_semialgebraic_set}
Partial tensors subordinate to a pattern $S$ that are completable to rank-one  tensors in the standard simplex form a semialgebraic set.
\end{proposition}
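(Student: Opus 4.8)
The plan is to exhibit the set of completable partial tensors as a coordinate projection of an explicitly semialgebraic set, and then invoke the Tarski--Seidenberg theorem, which guarantees that projections of semialgebraic sets are semialgebraic. Fix the format $n_1 \times \cdots \times n_d$ and let $N = \prod_{k} n_k$, so that tensors live in $\mathbb{R}^N$ with coordinates $t_{\bf i}$ indexed by ${\bf i} \in [n_1] \times \cdots \times [n_d]$. Write $W \subseteq \mathbb{R}^N$ for the set of rank-one tensors in the standard simplex $\Delta^{N-1}$.

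First I would give a parametric description of $W$ mirroring the factorization used in the proof of Theorem~\ref{thm:tensors}. Consider the polynomial map
\[
\Phi \colon \Delta^{n_1 - 1} \times \cdots \times \Delta^{n_d - 1} \to \mathbb{R}^N, \qquad
\Phi({\bf u}^1, \ldots, {\bf u}^d) = {\bf u}^1 \otimes \cdots \otimes {\bf u}^d,
\]
whose entries are the monomials $\prod_k u^k_{i_k}$. Since each factor lies in a simplex, the image automatically has nonnegative entries summing to $\prod_k (\sum_i u^k_i) = 1$, so $\Phi$ maps into $W$. I would then claim $\Phi$ is surjective onto $W$: every nonzero rank-one tensor with nonnegative entries factors, up to pairing off signs, as a tensor product of nonnegative vectors, and rescaling each such vector by its coordinate sum $s_k > 0$ puts it in $\Delta^{n_k - 1}$ while leaving the tensor unchanged because $\prod_k s_k$ equals the (unit) total sum. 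This surjectivity verification is the one genuine point to check, and I expect it to be the main---though minor---obstacle.

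With $W = \Phi\bigl(\Delta^{n_1-1} \times \cdots \times \Delta^{n_d-1}\bigr)$ in hand, the conclusion follows quickly. Each simplex $\Delta^{n_k-1}$ is semialgebraic (cut out by $u^k_j \geq 0$ and $\sum_j u^k_j = 1$), hence so is the product $P = \Delta^{n_1-1} \times \cdots \times \Delta^{n_d-1}$. The graph of the polynomial map $\pi_S \circ \Phi \colon P \to \mathbb{R}^S$, where $\pi_S$ forgets the coordinates outside $S$, is a semialgebraic subset of $P \times \mathbb{R}^S$: it is defined by the polynomial equations $m_{\bf i} = \prod_k u^k_{i_k}$ for ${\bf i} \in S$ together with the semialgebraic constraints defining $P$. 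A partial tensor subordinate to $S$ is completable precisely when it agrees on $S$ with some element of $W$, i.e. precisely when it lies in the image $(\pi_S \circ \Phi)(P)$. This image is the projection of the aforementioned graph to the $\mathbb{R}^S$ factor, so Tarski--Seidenberg finishes the proof.

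I would remark in passing that one may avoid the surjectivity check by instead taking $W$ to be the intersection of the rank-one variety---cut out by the $2 \times 2$ minors of all flattenings of the tensor---with the polynomial inequalities $t_{\bf i} \geq 0$ and the equation $\sum_{\bf i} t_{\bf i} = 1$; this exhibits $W$ itself as semialgebraic without any parametrization, after which projecting by $\pi_S$ again invokes Tarski--Seidenberg.
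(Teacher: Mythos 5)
Your proposal is correct, and your closing remark is, almost word for word, the paper's entire proof: the paper simply observes that the independence model is semialgebraic --- cut out by the $2\times 2$ minors of all flattenings, the nonnegativity constraints, and the sum-to-one equation --- and then applies the Tarski--Seidenberg theorem to the coordinate projection onto the entries in $S$. Your main route is a genuine variant: instead of this implicit description of the rank-one locus you use the parametric one, writing the model as the image of $\Phi(\mathbf{u}^1,\ldots,\mathbf{u}^d)=\mathbf{u}^1\otimes\cdots\otimes\mathbf{u}^d$ on a product of simplices, and then projecting the (semialgebraic) graph of $\pi_S\circ\Phi$. The trade-off is exactly the one you flag: the parametric route obliges you to prove surjectivity of $\Phi$ onto the rank-one part of the simplex, i.e., that every rank-one tensor with nonnegative entries summing to one factors through nonnegative (hence, after rescaling, simplex) vectors. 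That lemma is true and short --- pick an entry $t_{\mathbf{i}^*}>0$ and take as the $k$-th factor the fiber of $T$ through $\mathbf{i}^*$ in direction $k$; for a rank-one tensor the tensor product of these fibers equals $t_{\mathbf{i}^*}^{d-1}\,T$ --- but your phrase ``up to pairing off signs'' undersells the issue for $d\geq 3$, where signs among the factors need not pair off and the fiber argument is the clean way to produce nonnegative factors. What the parametric route buys is independence from the set-theoretic fact that the $2\times 2$ flattening minors cut out exactly the tensors of rank at most one, which the paper's one-line proof takes as known; what the paper's route buys is brevity. In both arguments the Tarski--Seidenberg projection step is identical.
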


\begin{proof}
The independence model is a semialgebraic set defined by $2 \times 2$-minors of all flattenings, nonnegativity constraints and entries summing to one. The statement of the proposition follows by the Tarski-Seidenberg theorem.
\end{proof}

The goal of this section is to find a semialgebraic description of this semialgebraic set, see~\cite{RealAlgebraicGeometry} for an introduction to real algebraic geometry.  The difference from characterizations in Theorems~\ref{halfnorm} and~\ref{main_theorem} is that we aim to derive a description without square roots.  
For $2 \times 2$ partial matrices with diagonal entries, a semialgebraic description is given in Example~\ref{2by2} and its derivation from the inequality containing square roots is explained in Remark~\ref{remark:2x2_case_example_vs_theorem}.

We will characterize the semialgebraic set of diagonal partial tensors which can be completed to rank-one tensors in the standard simplex. This is the positive part of the unit ball in the $L^{\frac{1}{d}}$ space.

\begin{proposition}\label{prop:semialgebraic_description}
There exists a unique irreducible polynomial $f$ of degree $d^{n-1}$ with 
constant term one that vanishes on the boundary of the set of diagonal partial tensors $T \subset (\mathbb{R}^n)^{\otimes d}$ which can be completed to rank-one
 tensors in the standard simplex.
The semialgebraic description takes the form $f \geq 0$, coordinates $\geq 0$
plus additional inequalities that separate our set from other chambers
in the region defined by $f \geq 0$.
\end{proposition}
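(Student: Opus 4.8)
The plan is to realize the boundary hypersurface as the image of a hyperplane under the coordinatewise $d$-th power map, and then read off existence, irreducibility, and degree from standard facts about finite morphisms. By Theorem~\ref{thm:tensors}, the set in question is $\{a \in \mathbb{R}^n : a_i \geq 0,\ \sum_{i=1}^n a_i^{1/d} \leq 1\}$, whose nontrivial boundary piece is the real locus of $\sum_{i=1}^n a_i^{1/d} = 1$. Introduce the power map $\psi \colon \mathbb{C}^n \to \mathbb{C}^n$, $\psi(x_1, \ldots, x_n) = (x_1^d, \ldots, x_n^d)$, and the hyperplane $H = \{x_1 + \cdots + x_n = 1\}$. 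Substituting $x_i = a_i^{1/d}$ identifies this boundary with the image $\psi(H)$; more precisely, the real points $\psi(H(\mathbb{R}))$ are Zariski dense in $\psi(H)$. Since $H$ is irreducible of dimension $n-1$ and $\psi$ is finite, $\psi(H)$ is an irreducible hypersurface, so its vanishing ideal is a principal prime, and there is a unique-up-to-scalar irreducible polynomial $f$ with $\{f = 0\} = \psi(H)$.

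For the degree I would compute $\deg f = \deg \psi(H)$ by intersecting with a generic line. Working in $\mathbb{P}^n$ with $\psi([x_0 : \cdots : x_n]) = [x_0^d : \cdots : x_n^d]$, a generic line $\ell$ is an intersection of $n-1$ hyperplanes, each pulling back to a hypersurface of degree $d$; hence $\psi^{-1}(\ell)$ is a curve of degree $d^{n-1}$ and $\psi^{-1}(\ell) \cap H$ consists of $d^{n-1}$ points. The key point is that $\psi|_H$ is birational onto its image: over generic $a \in \psi(H)$ the fiber consists of the points $(\zeta^{k_1} x_1, \ldots, \zeta^{k_n} x_n)$ with $\zeta$ a primitive $d$-th root of unity and $\sum_i \zeta^{k_i} x_i = 1$, and for generic $x \in H$ only $k \equiv 0$ satisfies this. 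Birationality transports the $d^{n-1}$ intersection points to $d^{n-1}$ points of $\ell \cap \psi(H)$, giving $\deg f = d^{n-1}$; equivalently one can eliminate the $x_i$ from $\{x_i^d = a_i,\ \sum x_i = 1\}$ and bound the degree by B\'ezout. Finally, $0 \notin \psi(H)$, since $\psi(x) = 0$ forces $x = 0 \notin H$; therefore $f(0) \neq 0$ and we may normalize $f$ to have constant term one. (As a check, $n=2$, $d=2$ reproduces the irreducible degree-two polynomial $(a+b-1)^2 - 4ab$ of Example~\ref{2by2}, with constant term one.)

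For the semialgebraic description, the point is that the real locus of $f$ is strictly larger than our boundary sheet. The conjugate equations $\sum_i \zeta^{k_i} a_i^{1/d} = 1$ built into $f$ produce, for $a$ in the positive orthant and $d$ even, additional real sheets $\sum_i \epsilon_i a_i^{1/d} = 1$ with $\epsilon_i \in \{\pm 1\}$; these sheets cut $\{a \geq 0\}$ into several chambers, exactly one of which is our set, namely the chamber containing the origin cut off by the all-plus sheet. Thus $\{f \geq 0,\ a_i \geq 0\}$ contains our set together with neighboring chambers, and one must adjoin further inequalities separating the all-plus sheet from the others to isolate the correct component.

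The genuinely delicate part is this last step: giving a clean, provably correct list of separating inequalities requires understanding how the real sheets of $\{f = 0\}$ are arranged inside the positive orthant and verifying that the proposed conditions select precisely the chamber on which $\sum_i a_i^{1/d} \le 1$ rather than a spurious one. By contrast, the existence, irreducibility, and degree steps are comparatively routine once $\psi$ is in place, the only care needed being the birationality of $\psi|_H$ and the density of real points.
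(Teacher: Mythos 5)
Your proof is correct, and it produces the same polynomial as the paper, but by a genuinely different route. The paper's argument is Galois-theoretic: it writes $f$ explicitly as the orbit product
\begin{equation*}
p_{d,n}\;=\;\prod_{(y_1,\ldots,y_{n-1})\,:\,y_i^d=x_i}\bigl((1-y_1-\cdots-y_{n-1})^d-x_n\bigr),
\end{equation*}
i.e.\ as the norm of $(1-y_1-\cdots-y_{n-1})^d-x_n$ from the degree-$d^{n-1}$ extension $L$ of $K=\mathbb{Q}(x_1,\ldots,x_n)$ obtained by adjoining $d$-th roots of $x_1,\ldots,x_{n-1}$; invariance under $\mathrm{Aut}_K(L)\cong(\mathbb{Z}/d\mathbb{Z})^{n-1}$ together with integrality puts the product in $\mathbb{Q}[x_1,\ldots,x_n]$, counting the $d^{n-1}$ factors (each of degree one in the $x$'s) gives the degree, and irreducibility follows because the Galois group permutes the factors simply transitively, so no proper subproduct is stable. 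You instead take the image point of view: $f$ is the defining polynomial of $\psi(H)$ for the coordinatewise $d$-th power map $\psi$, irreducibility comes for free from irreducibility of images of irreducible varieties under finite morphisms, the degree comes from generic injectivity (birationality) of $\psi|_H$ combined with B\'ezout/the projection formula, and the normalization from $0\notin\psi(H)$. Since $\psi(H)$ is precisely the variety of the paper's elimination ideal $I$, the two constructions yield the same $f$. What the paper's approach buys is an explicit formula for $f$ (computationally useful, and the constant term is visibly one upon evaluating at $x=0$); what yours buys is a cleaner account of the degree, where the paper implicitly has to rule out cancellation of top-degree terms in the orbit product, and you are in fact more careful than the paper on two small points: the normalization $f(0)\neq 0$, and the Zariski-density statement needed for ``vanishes on the boundary'' to determine $f$ uniquely (though note the relevant dense set is $\psi$ of the simplex $H\cap\{x\geq 0\}$, not all of $\psi(H(\mathbb{R}))$; your argument is unchanged, since the simplex is already Zariski dense in $H$). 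Finally, your closing caveat is apt but not a defect relative to the paper: the chamber-separating inequalities in the proposition's second sentence are not made rigorous there either — the paper's proof also stops at the construction of $f$ — so on that point the two arguments are at the same level of completeness.
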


The proof of Proposition~\ref{prop:semialgebraic_description} was suggested to us by Bernd Sturmfels. For analogous proof idea, see~\cite[Lemma 2.1]{NieParriloSturmfels}.

\begin{proof}
Denote the diagonal entries of the partial tensor by $x_1, \ldots ,x_n$. We will show that the defining polynomial of the $\frac{1}{d}$-unit ball can be written as
\begin{equation}\label{equation:unit_ball} 
p_{d,n}= \prod _{\substack{(y_1, \ldots, y_{n-1}) \text{ s.t. } y_i^d = x_i  \\ \text{ for each } i}} \left((1-y_1-...-y_{n-1})^d-x_n\right).
\end{equation}
We want to eliminate $y_1,\ldots ,y_n$ from the ideal
\[
I=\left\langle y_1^d-x_1,\ldots, y_n^d-x_n,\sum_{i=1}^n y_i-1 \right\rangle \subset \mathbb{Q}[x_1,\ldots ,x_n,y_1,\ldots ,y_n].
\]
First replace $y_n$ by $1-y_1-...-y_{n-1}$ in the equation  $y_n^d-x_n$.
We consider
the field of rational functions $K = \mathbb{Q}(x_1,\ldots ,x_n)$. Solving the first $n-1$ equations $y_i^d-x_i$ 
is equivalent to adjoining the $d$-th roots of
$x_i$ for $i \in \{1,\ldots ,n-1\}$ to the base field.
This gives an extension $L$ of degree $d^{n-1}$ over $K$. 
The group $\text{Aut}_K(L)$ of all automorphisms of $L$ that leave $K$ fixed is $(\mathbb{Z}/d\mathbb{Z})^{n-1}$.
The product over all elements $(1-y_1-...-y_{n-1})^d-x_n$ in the orbit of $\text{Aut}_K(L)$ gives~(\ref{equation:unit_ball}), and thus lies in the base field $K$.
Every factor in the product~(\ref{equation:unit_ball}) is integral
over $\mathbb{Q}[x_1,\ldots ,x_n]$, hence the product~(\ref{equation:unit_ball})
is a degree $d^{n-1}$ polynomial in $x_1,\ldots ,x_n$.
No subproduct is left invariant under the automorphism group, so~(\ref{equation:unit_ball})
is irreducible.
\end{proof}

\bigskip \bigskip

\paragraph{{\bf}ACKNOWLEDGEMENTS}
We thank Bernd Sturmfels for introducing this problem to us, 
suggesting the proof of Proposition~\ref{prop:semialgebraic_description}
 and providing detailed feedback on the first draft of this article; 
Vishesh Karwa and Aleksandra Slavkovi\'c for suggesting the problem; 
Thomas Kahle for sharing his ideas on the project;  
Mario Kummer for correcting a mistake in the proof of 
Proposition~\ref{prop:semialgebraic_description}; 
Louis Theran for helping us with the complexity of algorithms.
We also thank the anonymous referees for correcting several mistakes
and aiding in the exposition. 
This collaboration was initiated while both authors were 
guests of the Max-Planck Institute for Mathematics, 
and was continued at the as2014 conference at Illinois 
Institute of Technology.

\bibliography{biblio}
\bibliographystyle{plain}
\end{document}